%\DeclareSymbolFont{AMSb}{U}{msb}{m}{n}
%\documentclass[oneside,letterpaper,11pt,leqno,noamsfonts]{amsart}
\documentclass[twoside,letterpaper,11pt,leqno]{amsart}

\usepackage[plainpages=false,pdfpagelabels,pdfencoding=auto,psdextra,colorlinks=true,urlcolor=black,linkcolor=black,citecolor=blue,hidelinks]{hyperref}

\usepackage[tmargin=1in, bmargin=1in, lmargin=1.49in, rmargin=1.49in]{geometry}

\usepackage{lmodern}
\usepackage[utf8x]{inputenc}
\usepackage[T1]{fontenc}
\usepackage[full]{textcomp}

\usepackage{amsopn}
\usepackage{amsmath}
\usepackage{amsfonts}
\usepackage{amssymb}
\usepackage{graphicx}
\usepackage{epstopdf}
\usepackage{float}
\usepackage{mathrsfs}
\usepackage{mathtools}
\usepackage[inline]{enumitem}
\usepackage{bm}
\usepackage[only,llbracket,rrbracket]{stmaryrd}
\SetSymbolFont{stmry}{bold}{U}{stmry}{m}{n}
\usepackage[caption=false]{subfig}
\usepackage{underscore}
\usepackage{multirow}
\usepackage[capitalize,nameinlink]{cleveref}

%\usepackage[nomarkers,nolists]{endfloat}
%\renewcommand{\efloatseparator}{\vfill}

%\usepackage{changepage}

%\usepackage{fixmath}

% Ex: \set{A \st B}, \set*{A \st B} (this one stretches to adjust height), \set{A \where B}, or \set*{A \where B}
\newcommand\where{; \allowbreak \nonscript\; \mathopen{}}

\DeclarePairedDelimiterX\set[1]{\lbrace}{\rbrace}{\nonscript\,  #1 \nonscript\,}

\newcommand\restr[2]{{\left.\kern-\nulldelimiterspace #1 \right\rvert_{#2}}}
\newcommand\restrr[2]{{\kern-\nulldelimiterspace #1 \rvert_{#2}}}

\newcommand{\ol}[1]{\overline{#1}}

\newcommand{\lp}{\left(}
\newcommand{\rp}{\right)}
\newcommand{\lb}{\left[}
\newcommand{\rb}{\right]}
\newcommand{\ldb}{\llbracket}
\newcommand{\rdb}{\rrbracket}

\newcommand{\li}{\langle}
\newcommand{\ri}{\rangle}
\newcommand{\lli}{\left\li}
\newcommand{\rri}{\right\ri}
\newcommand{\lv}{\lvert}
\newcommand{\rv}{\rvert}

\newcommand{\lV}{\lVert}
\newcommand{\rV}{\rVert}

\newcommand\bff{\bm{f}}

\newcommand\bq{\bm{q}}
\newcommand\br{\bm{r}}

\newcommand\bu{\bm{u}}
\newcommand\bv{\bm{v}}

\newcommand\bx{\bm{x}}

\newcommand\bzero{\bm{0}}

\newcommand\bbR{\mathbb{R}}

\newcommand\cA{\mathcal{A}}
\newcommand\cB{\mathcal{B}}

\newcommand\cE{\mathcal{E}}
\newcommand\cF{\mathcal{F}}

\newcommand\cI{\mathcal{I}}

\newcommand\cO{\mathcal{O}}
\newcommand\cP{\mathcal{P}}

\newcommand\cS{\mathcal{S}}
\newcommand\cT{\mathcal{T}}
\newcommand\cU{\mathcal{U}}

\newcommand\ff{\mathfrak{f}}

\newcommand\fD{\mathfrak{D}}

\newcommand\fT{\mathfrak{T}}

\newcommand\sM{\mathscr{M}}

\newcommand\hr{\hat{r}}

\newcommand\hv{\hat{v}}

\newcommand\bhr{\bm{\hr}}

\newcommand\bhv{\bm{\hv}}

\newcommand\tv{\tilde{v}}

\newcommand\btv{\bm{\tv}}

\newcommand\wtB{\widetilde{B}}

\newcommand\half{\frac{1}{2}}
\newcommand\halff{{1/2}}

\newcommand\mone{{-1}}

\newcommand{\nn}{\nonumber}
\newcommand\col{\colon}

\newcommand\dd{\mathop{}\!\mathrm{d}}

\newcommand\grad{\nabla}

\let\div\relax\DeclareMathOperator{\div}{div}
\DeclareMathOperator{\curl}{curl}

\DeclareMathOperator{\diag}{diag}

\DeclareMathOperator{\NNZ}{NNZ}

\allowdisplaybreaks[4]

\newtheorem{theorem}{Theorem}
\newtheorem{lemma}[theorem]{Lemma}

\newtheorem{corollary}[theorem]{Corollary}
\theoremstyle{remark}
\newtheorem{remark}[theorem]{Remark}

\numberwithin{theorem}{section}
\numberwithin{equation}{section}

\title[Preconditioning using Interior Penalty]{Auxiliary Space Preconditioning of Finite Element Equations Using a Nonconforming Interior Penalty Reformulation and Static Condensation}
\author{Delyan Z. Kalchev} 
\address{Center for Applied Scientific Computing, Lawrence Livermore National Laboratory, P.O. Box 808, L-561, Livermore, CA 94551, USA.}
\email{kalchev1@llnl.gov}
\author{Panayot S. Vassilevski} 
\address{Department of Mathematics and Statistics, Portland State University, Portland, OR 97207, USA, and Center for Applied Scientific Computing, Lawrence Livermore National Laboratory, P.O. Box 808, L-561, Livermore, CA 94551, USA.}
\email{panayot@pdx.edu, vassilevski1@llnl.gov}

\thanks{This work was performed under the auspices of the U.S. Department of Energy by Lawrence Livermore National Laboratory under Contract DE-AC52-07NA27344 (LLNL-JRNL-788660).}
\thanks{The work of the second author was partially supported by NSF under grant DMS-1619640.}

\newcommand\Element{\textsc{element}}
\newcommand\Face{\textsc{face}}
\newcommand\Entity{\textsc{entity}}
\newcommand\Elements{\textsc{elements}}
\newcommand\Faces{\textsc{faces}}
\newcommand\Entities{\textsc{entities}}
\newcommand\Dof{\textsc{d}of}
\newcommand\Dofs{\textsc{d}ofs}
\newcommand\Edof{\textsc{e}dof}
\newcommand\Edofs{\textsc{e}dofs}
\newcommand\Bdof{\textsc{b}dof}
\newcommand\Bdofs{\textsc{b}dofs}
\newcommand\Adof{\textsc{a}dof}
\newcommand\Adofs{\textsc{a}dofs}

\begin{document}

\begin{abstract}
We modify the well-known interior penalty finite element discretization method so that it allows for element-by-element assembly. This is possible due to the introduction of additional unknowns associated with the interfaces between neighboring elements.
The resulting bilinear form, and a Schur complement (reduced) version of it, are utilized in a number of auxiliary space preconditioners for the original conforming finite element discretization problem. These preconditioners are analyzed on the fine scale and their performance is illustrated on model second order scalar elliptic problems discretized with high order elements.

\smallskip
\noindent \textbf{Key words.} finite element method, auxiliary space, fictitious space, preconditioning, interior penalty, static condensation, algebraic multigrid, element-by-element assembly, high order
\end{abstract}

\maketitle

\section{Introduction}

The well-known interior penalty (IP) discretization method \cite{1982IP,1976IP,1978IP,2003IPnonmatch} couples degrees of freedom across two neighboring elements, so it does not possess the element-by-element assembly property, which is inherent to conforming finite element discretization methods. The element-by-element assembly property is useful, for example, in ``matrix-free'' computations, since it minimizes the coupling across element interfaces. Also, certain element-based coarsening as in the AMGe methods, \cite{VassilevskiMG},  can be employed then, such that it  maintains the element-by-element assembly property on coarse levels. 
The modification we propose consists of the following simple idea. The IP bilinear form originally contains a jump term $\int_\ff\ldb u \rdb \ldb v \rdb\dd\rho$, over each interface $\ff$ between any two neighboring elements $\tau_-$ and $\tau_+$. Here, this term is replaced by two other terms, involving a new unknown, $u_\ff$, associated with $\ff$. We have then
$\int_\ff (u_- - u_\ff)(v_- - v_\ff)\dd\rho + \int_\ff(u_+ - u_\ff)(v_+ - v_\ff)\dd\rho$, where $u_-$ and $v_-$ come from the element $\tau_-$, while $u_+$ and $v_+$ come from the other element, $\tau_+$. Clearly, letting
$u_\ff = \frac{1}{2}(u_- + u_+)$ (and similarly, $v_\ff = \frac{1}{2}(v_- + v_+)$), recovers the 
original jump term, scaled by $\frac{1}{2}$. It is also clear that introducing the additional space of functions $v_b = (v_\ff)$, associated with the set of interfaces $\set{\ff}$, allows associating each of the two new terms with a unique neighboring element, i.e.,
$\int_\ff (u_- - u_\ff)(v_- - v_\ff)\dd\rho$ with $\tau_-$ and
$\int_\ff(u_+ - u_\ff)(v_+ - v_\ff)\dd\rho$ with $\tau_+$. 
Consequently, the coupling occurs only through these interface unknowns. 
We consider $v_b = (v_\ff)$ discontinuous from face to face.

The introduction of interface unknowns is a simple and basic idea to decouple neighboring elements. For example, similar idea is utilized for hybridization (see, e.g., \cite{CiarletFEM,BoffiMFE}) of finite element methods. In the context of hybridization, the interface spaces are used for Lagrangian multipliers associated with constraints on jump conditions across elements, where those Lagrangian multipliers can also be interpreted as solution traces. Here, the approach is more direct and somewhat simpler -- the interface spaces are explicitly built and interpreted as trace spaces, the jump conditions are between the interfaces and the elements (on both sides of each interface) instead of between neighboring elements, and these conditions are introduced as part of the minimization functional rather than as constraints. Moreover, this paper is devoted to using the reformulations for building preconditioners rather than alternative discretization methods.

Clearly, the modification can be employed with $\tau_-$ and $\tau_+$ replaced by subdomains
$T_-$ and $T_+$ (for example, being unions of finite elements) and $\ff$ replaced by 
the interface $F$ between $T_-$ and $T_+$. This is the approach we exploit, when building preconditioners for an original conforming discretization (i.e., no interior penalty terms to begin with). One may view the set of subdomains, $T$, as a coarse triangulation
$\cT^H$. 
In that case, each $T \in \cT^H$ is a union of fine elements from an initial fine triangulation $\cT^h$. We refer to $T$ as an \emph{agglomerated element}, or simply an {\em agglomerate}.
The modified IP method employs two sets of discontinuous spaces: one space of functions $u_e = (u_T)$ associated with 
the agglomerates $T \in \cT^H$ and the space of functions $u_b= (u_F)$ associated with the interfaces $\set{F}$ between any two neighboring $T_-$ and $T_+$ from $\cT^H$. 
The resulting bilinear form consists of the local bilinear forms  $a_T(u_T,v_T) + \frac{1}{\delta} \sum_{F \subset \partial T} \int_F (u_T - u_F)(v_T-v_F)\dd\rho$ associated with each $T$ ($\delta > 0$ is the penalty parameter).  
The trial functions are $u_e = (u_T)$ and $u_b = (u_F)$, whereas, similarly, the test functions are  $v_e=(v_T)$ and $v_b = (v_F)$. Here, $a_T(\cdot,\cdot)$ is a local, on $T$, version of the bilinear form in the original conforming discretization.

The goal of the present paper is to study the modified IP bilinear form as a tool for building preconditioners for the original conforming bilinear form utilizing the auxiliary space technique which goes back to S. Nepomnyaschikh (see \cite{2007DD}) and studied in detail by J. Xu \cite{1996AuxiliarySpace}. 
We analyze both additive and multiplicative versions of the auxiliary space preconditioners 
for general smoothers, following \cite[Theorem 7.18]{VassilevskiMG},  by verifying the assumptions needed there. 
Another use of the element-by-element assembly property of the modified IP method is the application of the spectral AMGe technique to construct algebraic multigrid (AMG) preconditioners for the IP bilinear form and for its reduced Schur complement form. We note that the modified IP form allows for static condensation, i.e., we can eliminate the $u_T$ unknowns (they are decoupled from each other) and, thus, end up with a problem for the interface unknowns, $u_F$, only. 

We have implemented these auxiliary space preconditioners and tested their theoretically proven mesh independent performance on model 2D and 3D scalar second order elliptic problems, including high order elements.  

The remainder of the paper is structured as follows. \Cref{sec:basic} introduces basic concepts, spaces, notation, and a problem of interest. The IP formulation and the respective auxiliary space preconditioners are presented in \cref{sec:IP}. Their properties are studied, showing (\cref{thm:ipopt}) the general optimality of a fine-scale auxiliary space preconditioning strategy via the IP reformulation. \Cref{sec:smoother} describes a polynomial smoother employed in the preconditioner. A generic AMGe approach that can be utilized for solving the IP auxiliary space problem is outlined in \cref{sec:nonconfprecond}. Numerical examples are demonstrated in \cref{sec:numerical}, while conclusions and future work are left for the final \cref{sec:conclusion}.

\section{Basic setting}
\label{sec:basic}

This section is devoted to providing foundations by addressing generic notions and basic procedures. Notation and abbreviated designations are introduced to facilitate a more convenient presentation in the rest of the paper.

\subsection{Mesh and agglomeration}
\label{ssec:meshagg}

A domain $\Omega \subset \bbR^d$ ($d$ is the space dimension) with a Lipschitz-continuous boundary, a (fine-scale) triangulation $\cT^h = \set{\tau}$ of $\Omega$, and a finite element space $\cU^h$ on $\cT^h$ are given. The mesh $\cT^h$ is seen as a set of elements and respective associated faces, where a \emph{face} is the interface, of dimension $d-1$, between two adjacent elements. In general, the degrees of freedom (dofs) of $\cU^h$ are split into dofs associated with the interiors of the elements and dofs associated with the faces (cf. \cref{fig:nonconfsplitting}), where a face dof belongs to multiple elements and can also belong to multiple faces, whereas an interior dof always belongs to a single element. The focus of this paper is on $\cU^h$ consisting of continuous piece-wise polynomial functions, equipped with the usual nodal dofs.

Let $\cT^H = \set{T}$ be a partitioning of $\cT^h$ into non-overlapping connected sets of fine-scale elements, called \emph{agglomerate elements} or, simply, \emph{agglomerates}; see \cref{fig:aggs}. In general, $\cT^H$ can be obtained by partitioning the dual graph of $\cT^h$, which is a graph with nodes the elements in $\cT^h$, where two elements are connected by an edge if they share a face. It can be expressed as a relation element_element. That is, all $T \in \cT^H$ are described in terms of the elements $\tau \in \cT^h$ via a relation \Element_element. Note that capitalization indicates agglomerate entities in $\cT^H$, like {\Element} (short for ``agglomerate element''), \Face, or \Entity, whereas regular letters indicate fine-scale entities in $\cT^h$, like element, face, or entity. This convention, unless otherwise specified, is used in the rest of the paper. Using element_face next, provides
\[
\text{\Element_face} = \text{\Element_element} \times \text{element_face},
\]
where element_face represents the relation, in $\cT^h$, between elements and their adjacent faces. Then, an intersection procedure over the sets described by \Element_face constructs the agglomerate {\Faces} in $\cT^H$ as sets of fine-scale faces, expressed via a relation \Face_face, and related to {\Elements} in the form of \Element_\Face; cf. \cref{fig:faces}. Consequently, each {\Face} can be consistently recognized as the $(d-1)$-dimensional surface that serves as an interface between two adjacent {\Elements} in $\cT^H$. The set of obtained {\Faces} in $\cT^H$ is denoted by $\Phi^H = \set{F}$. Considering the dofs in $\cU^h$, the relations element_dof, face_dof, and
\begin{align*}
\text{\Element_dof} &= \text{\Element_element} \times \text{element_dof},\\
\text{\Face_dof} &= \text{\Face_face} \times \text{face_dof}
\end{align*}
are determined.

For additional information on relation tables (matrices) and their utilization, see \cite{VassilevskiMG,2002AMGeTopology}.

\begin{figure}
\centering
\subfloat[][{{\Elements} in 3D}]{\includegraphics[width=0.43\textwidth]{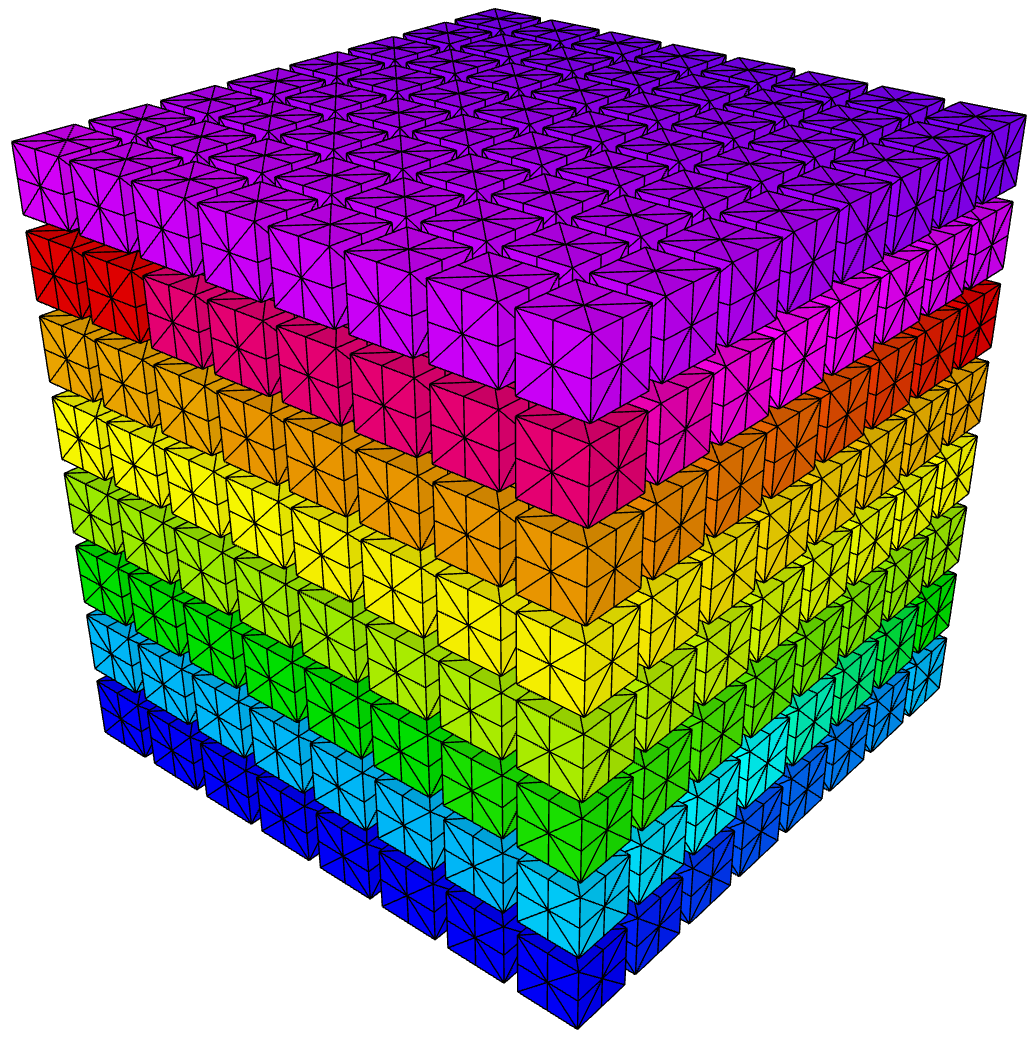}\label{fig:ELEMENTS}}\quad
\subfloat[][{{\Elements} in 2D}]{\includegraphics[width=0.35\textwidth]{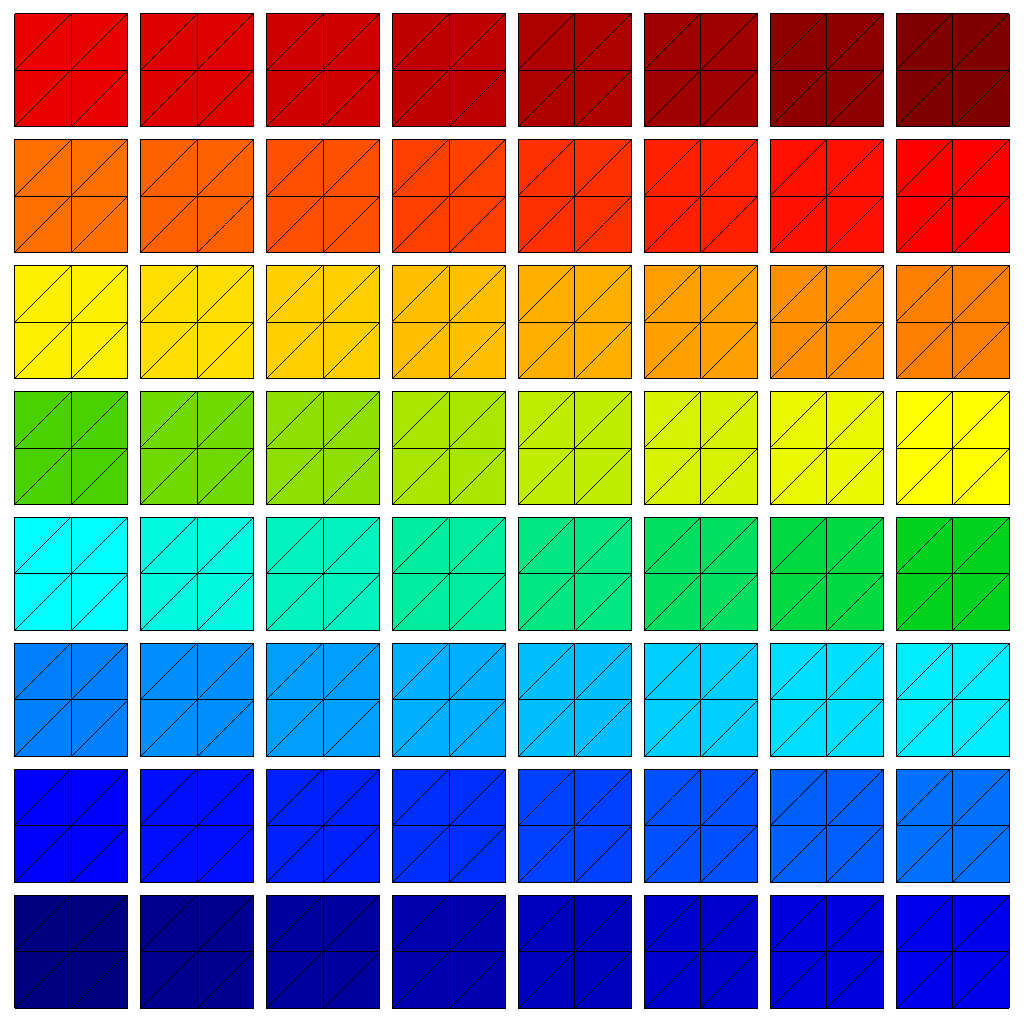}\label{fig:ELEMENTS2D}}
\caption[]{Examples of agglomerates (designated as {\Elements}) of (fine-scale) elements, utilized in the IP reformulation.}\label{fig:aggs}
\end{figure}

\begin{figure}
\centering
\includegraphics[width=0.50\textwidth]{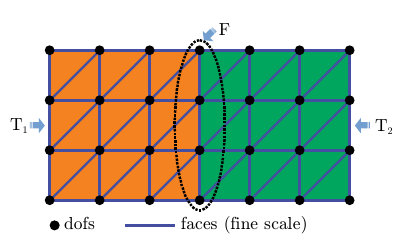}
\caption[]{An illustration of the designation of a {\Face} as a set of (fine-scale) faces, serving as an interface between {\Elements}. Here, the {\Elements} $T_1$ and $T_2$ are related to the {\Face} $F$ in the relation table \Element_\Face.}\label{fig:faces}
\end{figure}

\subsection{Pairs of nonconforming spaces}
\label{ssec:spaces}

\begin{figure}
\centering
\includegraphics[width=1.00\textwidth]{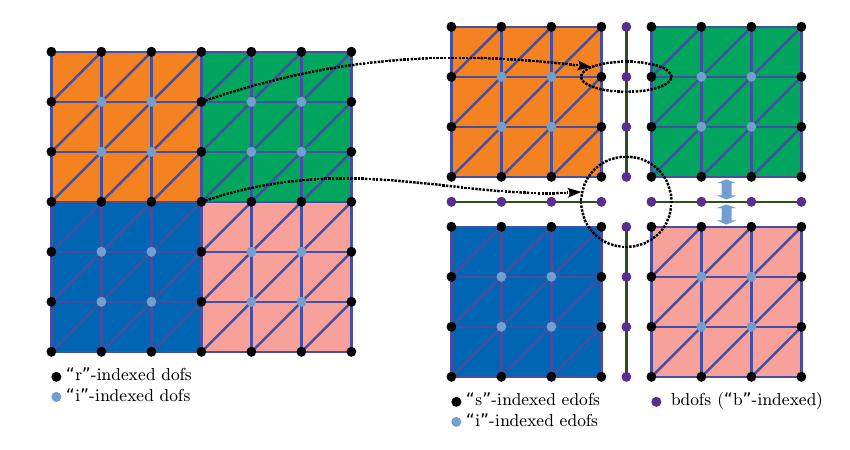}
\caption[]{An illustration of the construction of (fine-scale) nonconforming finite element spaces $\cE^h$ and $\cF^h$ from a conforming space $\cU^h$, utilizing agglomeration that provides {\Elements} and {\Faces}. Note that the only inter-{\Entity} coupling in the IP reformulation is between {\Elements} and their respective {\Faces}.}\label{fig:nonconfsplitting}
\end{figure}

A main idea in this work is to obtain discontinuous (nonconforming) finite element spaces and formulations on $\cT^H$ and $\Phi^H$. To that purpose, define the finite element spaces $\cE^h$ and $\cF^h$ via restrictions (or traces) of functions in $\cU^h$ onto $T \in \cT^H$ and $F \in \Phi^H$, respectively. Namely,
\begin{alignat*}{2}
\cE^h &= \Big\lbrace\, v_e^h \in L^\infty(\Omega) \where\;\; &&\forall\, T\in \cT^H,\; \exists\, v^h \in \cU^h\col\;\, \restr{v_e^h}{T} = \restr{v^h}{T} \,\Big\rbrace,\\
\cF^h &= \Big\lbrace\,v_b^h \in L^\infty(\cup_{F\in\Phi^H}F) \where\;\; &&\forall\, F\in \Phi^H,\; \exists\, v^h \in \cU^h\col\;\, \restr{v_b^h}{F} = \restr{v^h}{F} \,\Big\rbrace.
\end{alignat*}
Note that $\cE^h$ and $\cF^h$ are fine-scale spaces despite the utilization of agglomerate mesh structures like $\cT^H$ and $\Phi^H$, which justifies the parameter ``$h$'' (versus ``$H$''). Accordingly, the bases in $\cE^h$ and $\cF^h$ are derived via respective restrictions (or traces) of the basis in $\cU^h$. The degrees of freedom in $\cE^h$ and $\cF^h$ are obtained in a corresponding manner from the dofs in $\cU^h$, as illustrated in \cref{fig:nonconfsplitting}. For simplicity, ``dofs'' is reserved for the degrees of freedom in $\cU^h$, whereas ``edofs'' and ``bdofs'' are reserved for $\cE^h$ and $\cF^h$, respectively. Moreover, ``adofs'' designates the edofs and bdofs collectively and is associated with the product space $\cE^h\times\cF^h$. In more detail, edofs and bdofs are obtained by ``cloning'' all respective dofs for every agglomerate {\Entity} that contains the dofs, in accordance with \Element_dof and \Face_dof. For example, in \cref{fig:nonconfsplitting}: the dof in the center is cloned into four edofs (belonging to four separate {\Elements}) and four bdofs (belonging to four separate {\Faces}), thus, cloning that dof into eight adofs total; the dof in the interior of the {\Face} is cloned into three adofs -- one bdof (belonging to the {\Face} itself) and two edofs (belonging to two separate {\Elements}); and the dofs in the interiors of {\Elements} are simply copied as single edofs. Hence, each {\Entity} receives and it is the sole owner of a copy of all dofs it contains and there is no intersection between {\Entities} in terms of edofs and bdofs, i.e., they are completely separated without any sharing, making $\cE^h$ and $\cF^h$ spaces of discontinuous functions. Nevertheless, dofs, edofs, and bdofs are connected via their common ``ancestry'' founded on the above ``cloning'' procedure. Thus, the restrictions (or traces) of finite element functions in one of the spaces and their representations as functions in some of the other spaces is seen and performed in purely algebraic context. For example, if $\bq$ is a vector in terms of the edofs of some $T \in \cT^H$, that represents locally, on $T$, a function in $\cE^h$, and $F \subset \partial T$, then $\restr{\bq}{F}$ denotes a vector in terms of the bdofs of $F \in \Phi^H$, that represents locally, on $F$, a function in $\cF^h$. This is unambiguous and should lead to no confusion as it only involves a subvector and appropriate index mapping from edofs to bdofs, without any actual transformations, since $\cE^h$ and $\cF^h$ are trace (restriction) spaces for $\cU^h$. This constitutes an ``algebraic'' procedure, mimicking the behavior of restrictions (or traces) of finite element functions, on $T$, that vectors like $\bq$ represent, which are, generally, also supported on $F$. In what follows, finite element functions are identified with vectors on the degrees of freedom in the respective space.

Coarse subspaces $\cE^H \subset \cE^h$ and $\cF^H \subset \cF^h$ can be constructed by, respectively, selecting linearly independent vectors $\set{\bq_{T,i}}_{i=1}^{m_T}$, for every $T \in \cT^H$, and $\set{\bq_{F,i}}_{i=1}^{m_F}$, for every $F \in \Phi^H$, forming the bases for the coarse spaces. This is an ``algebraic'' procedure, formulating the coarse basis functions as linear combinations of fine-level basis functions, i.e., as vectors in terms of the fine-level degrees of freedom. The basis vectors are organized  appropriately as columns of \emph{prolongation} (or \emph{interpolation}) matrices $\cP_e\col \cE^H \mapsto \cE^h$ and $\cP_b\col \cF^H \mapsto \cF^h$, forming $\cP\col \cE^H \times \cF^H \mapsto \cE^h \times \cF^h$ as $\cP = \diag(\cP_e, \cP_b)$. For consistency, the corresponding degrees of freedom (associated with the respective coarse basis vectors) in $\cE^H$ and $\cF^H$ are respectively called ``\Edofs'' and ``\Bdofs'', and the collective term ``\Adofs'' is associated with $\cE^H\times\cF^H$. In essence, this is based on the ideas in AMGe (element-based algebraic multigrid) methods \cite{VassilevskiMG,2002AMGeTopology,2003AMGe,2007AMGe,2008AMGe,2012SAAMGe}. A particular way to obtain coarse basis vectors and construct coarse spaces, $\cE^H$ and $\cF^H$, is described in \cref{ssec:ipcoarseform}.

\subsection{Model problem}

The model problem considered in this paper is the second order scalar elliptic partial differential equation (PDE)
\begin{equation}\label{eq:pde}
-\div(\kappa\grad u) = f(\bx) \text{ in } \Omega,
\end{equation}
where $\kappa \in L^\infty(\Omega)$, $\kappa > 0$, is a given permeability field, $f \in L^2(\Omega)$ is a given source, and $u \in H^1(\Omega)$ is the unknown function. For simplicity of exposition, the boundary condition $u = 0$ on $\partial \Omega$, the boundary of $\Omega$, is considered, i.e., $u \in H^1_0(\Omega)$. The ubiquitous variational formulation $\min_{v \in H^1_0(\Omega)} [(\kappa\grad v, \grad v) - 2(f, v)]$ of \eqref{eq:pde} is utilized, providing the weak form
\begin{equation}\label{eq:weakform}
\text{Find } u \in H^1_0(\Omega)\col \; (\kappa \grad u, \grad v) = (f, v), \quad\forall v \in H^1_0(\Omega),
\end{equation}
where $(\cdot,\cdot)$ denotes the inner products in both $L^2(\Omega)$ and $[L^2(\Omega)]^d$. Consider the fine-scale finite element space $\cU^h \subset H^1_0(\Omega)$ defined on $\cT^h$. Using the finite element basis in $\cU^h$, \eqref{eq:weakform} induces the following linear system of algebraic equations:
\begin{equation}\label{eq:H1linsys}
A \bu = \bff,
\end{equation}
for the global symmetric positive definite (SPD) stiffness matrix $A$. Moreover, the local, on agglomerates, symmetric positive semidefinite (SPSD) stiffness matrices $A_T$, for $T \in \cT^H$, are obtainable, such that $A = \sum_{T \in \cT^H}A_T$ (the summation involves an implicit local-to-global mapping).

\section{Interior penalty approach}
\label{sec:IP}

In this section, an approach for obtaining preconditioners, based on the ideas of an \emph{interior penalty} (IP) method (see \cite{2003IPnonmatch}) is described and studied. A main idea is that, instead of using penalty terms on the jumps directly between {\Elements}, the interface space $\cF^h$ is employed to avoid direct coupling between {\Elements}.

\subsection{Formulation}
\label{ssec:ipformulation}

Consider the nonconforming discrete quadratic minimization formulation of \eqref{eq:pde}
\[
\min\sum_{T\in \cT^H}\lb \lp \kappa \grad v_e^h,\; \grad v_e^h \rp_T + \frac{1}{\delta} \sum_{F \subset \partial T} \lli \restr{v_e^h}{F} - v_b^h,\; \restr{v_e^h}{F} - v_b^h \rri_F \rb - 2(f, v_e^h),
\]
for $[v_e^h, v_b^h]\in \cE^h\times \cF^h$, providing, in lieu of \eqref{eq:weakform}, the weak form: Find $[u_e^h, u_b^h] \in \cE^h\times \cF^h$, such that
\begin{equation}\label{eq:ipweakform}
\sum_{T\in \cT^H}\lb \lp \kappa \grad u_e^h,\; \grad v_e^h \rp_T + \frac{1}{\delta} \sum_{F \subset \partial T} \lli \restr{u_e^h}{F} - u_b^h,\; \restr{v_e^h}{F} - v_b^h \rri_F \rb = (f, v_e^h),
\end{equation}
for all $[v_e^h, v_b^h] \in \cE^h\times \cF^h$. Here, $(\cdot,\cdot)_T$ and $\li\cdot,\cdot\ri_F$ denote the inner products in $[L^2(T)]^d$ and $L^2(F)$, respectively, and $\delta \in (0,1)$ is a small penalty parameter that can, generally, depend on $h$. The notation in \eqref{eq:ipweakform} requires an explanation. Namely, \eqref{eq:ipweakform} involves an implicit restriction of $u_e^h$ and $v_e^h$, which are generally discontinuous across $\partial T$, onto $T$. This removes the ambiguity from $u_e^h|_F$ and $v_e^h|_F$ by considering ``one-sided'' traces, involving the implicit restrictions to $T$ prior to further restricting to $F$. Therefore, the only inter-{\Entity} coupling, introduced by the bilinear form in \eqref{eq:ipweakform}, is between the edofs in $T \in \cT^H$ and the bdofs on $\partial T$. Thus, the edofs are not directly coupled across the {\Elements} and an assembly procedure for the matrix, corresponding to the bilinear form in \eqref{eq:ipweakform}, would only need to add (accumulate) contributions associated with bdofs.

Equation \eqref{eq:ipweakform} is slightly modified to obtain a formulation that is easier to implement algebraically and to facilitate the results in \cref{ssec:analysis} below. Particularly, only the interface penalty term is changed -- instead of the more standard $L^2(F)$ inner product in \eqref{eq:ipweakform}, $\li\cdot,\cdot\ri_F$, the inner product induced by the (restricted) diagonal of $A$ is utilized for the interface term. Consider $\cA_T$ -- the local, on $T \in \cT^H$, and corresponding to a summand in \eqref{eq:ipweakform}, matrix associated with the quadratic form
\begin{equation}\label{eq:localip}
\begin{bmatrix}
\bv_e\\
\bv_b
\end{bmatrix}^T
\cA_T
\begin{bmatrix}
\bv_e\\
\bv_b
\end{bmatrix}
= \bv_e^T A_T \bv_e + \frac{1}{\delta} \sum_{F\subset \partial T} \lp \restr{\bv_e}{F} - \restr{\bv_b}{F} \rp^T D_F \lp \restr{\bv_e}{F} - \restr{\bv_b}{F} \rp,
\end{equation}
where $\bv_e$ is defined on the edofs of $T$, $\bv_b$ -- on the bdofs on $\partial T$, $D_F$ is the restriction of the diagonal, $D$, of the global $A$ onto the bdofs of $F$, and $\cA_T$ takes the block form
\begin{equation}\label{eq:cAT}
\cA_T =
\begin{bmatrix}
\cA_{T,ee} &\cA_{T,eb}\\
\cA_{T,be} &\cA_{T,bb}
\end{bmatrix}.
\end{equation}
Here, the discussion in \cref{ssec:spaces}, concerning index mapping as part of the restriction, applies. Respective consistent, with the construction of $\cA_T$, relations {\Element}_edof, {\Element}_bdof, and, consequently, {\Element}_adof are also available. The global (SPD) IP matrix is obtainable via standard accumulation (assembly) -- $\cA = \sum_{T \in \cT^H}\cA_T$ -- which, as mentioned above, would involve only accumulation (addition) of the $\cA_{T,bb}$ blocks, while the remaining portions of the local matrices are simply copied into the global matrix.

Both $A_T$ and $\cA_T$ are SPSD with null spaces spanned by respective constant vectors (excluding essential boundary conditions). However, both $\cA_{T,bb}$ and $\cA_{T,ee}$ are positive definite due to the interface terms. In general, using that $A$ has strictly positive diagonal entries, $\cA_{T,ee}$ is positive definite if and only if every nonzero vector in the null space of $A_T$ has a nonzero value on some {\Face} $F \subset \partial T$. This is the case here, since the null space of $A_T$ is spanned by the constant vector.

\subsection{Auxiliary space preconditioners}
\label{ssec:ipaux}

Here, preconditioners based on the IP formulation are derived. Consider the finite element space $\cE^h \times \cF^h$, which in the context here is regarded as an ``auxiliary'' space. Let $\Pi_h\col \cE^h \times \cF^h \mapsto \cU^h$ be a linear transfer operator, to be defined momentarily. Identifying finite element functions with vectors allows to consider $\Pi_h$ as a matrix and obtain $\Pi_h^T\col \cU^h \mapsto \cE^h \times \cF^h$. In order to define the action of $\Pi_h$, recall that the relation between dofs, on one side, and adofs, on the other, is known and unambiguous. Therefore, it is reasonable to define the action of $\Pi_h$ as taking the arithmetic average, formulated in terms of adofs that correspond to a particular dof, of the entries of a given (auxiliary space) vector and obtaining the respective entries of a mapped vector defined on dofs. That is, for any dof $l$, let $J_l$ be the set of corresponding adofs (the respective ``cloned'' degrees of freedom) and consider a vector $\bhv$, defined in terms of adofs. Then,
\[
(\Pi_h \bhv)_l = \frac{1}{\lv J_l \rv} \sum_{j\in J_l} (\bhv)_j.
\]
Clearly, all row sums of $\Pi_h$ equal 1 and each column of $\Pi_h$ has exactly one nonzero entry. Assuming that $\cT^h$ is a regular (non-degenerate) mesh \cite{BrennerFEM}, $\lv J_l \rv$ is bounded, independently of $h$. That is,
\begin{equation}\label{eq:meshreg}
1 \le \lv J_l \rv \le \varkappa,
\end{equation}
for a constant $1 \le \varkappa < \infty$, which depends only on the regularity of $\cT^h$, but not on $h$. Indeed, let $\varkappa$ be the global maximum number of {\Elements} and {\Faces}, in $\cT^H$, that a dof can belong to, which, in turn, is bounded by the global maximum number of elements and faces, in $\cT^h$, that a dof can belong to.

Let $M$ be a ``smoother'' for $A$, such that $M^T + M - A$ is SPD, and $\cB$ -- a symmetric preconditioner for $\cA$. Define the \emph{additive auxiliary space preconditioner} for $A$
\begin{equation}\label{eq:Badd}
B^\mone_{\mathrm{add}} = \ol{M}^\mone + \Pi_h \cB^\mone \Pi_h^T,
\end{equation}
and the \emph{multiplicative auxiliary space preconditioner} for $A$
\begin{equation}\label{eq:Bmult}
B^\mone_{\mathrm{mult}} = \ol{M}^\mone + (I - M^{-T}A)\Pi_h \cB^\mone \Pi_h^T (I - A M^\mone),
\end{equation}
where $\ol{M} = M (M + M^T -A)^\mone M^T$ is the symmetrized (in fact, SPD) version of $M$. In case $M$ is symmetric, $\ol{M}^\mone$ in $B^\mone_{\mathrm{add}}$ can be replaced by $M^\mone$. The action of $B^\mone_{\mathrm{mult}}$ is obtained via a standard ``two-level'' procedure \cite{VassilevskiMG}:

Given $\bv_0 \in \bbR^{\dim(\cU^h)}$, $\bv_\mathrm{m} = B^\mone_{\mathrm{mult}} \bv_0$ is computed by the following steps:
\begin{enumerate}[label=(\roman*)]
\item ``pre-smoothing'': $\bv_1 = M^{-1} \bv_0$;
\item residual transfer to the auxiliary space: $\bhr = \Pi_h^T(\bv_0 - A \bv_1)$;
\item auxiliary space correction: $\bhv = \cB^\mone \bhr$;
\item correction transfer from the auxiliary space: $\bv_2 = \bv_1 + \Pi_h \bhv$;
\item ``post-smoothing'': $\bv_\mathrm{m} = \bv_2 + M^{-T}(\bv_0 - A \bv_2)$.
\end{enumerate}

\subsection{Analysis}\label{ssec:analysis}

Properties of the preconditioners, showing their optimality, are studied next. Define the operator $\cI_h\col  \cU^h \mapsto \cE^h \times \cF^h$, for\footnote{Finite element functions and algebraic vectors are identified, which should cause no confusion.} $\bv \in \cU^h$, via
\[
(\cI_h \bv)_j = (\bv)_l,
\]
for each adof $j$, where $j \in J_l$, for the corresponding dof $l$. This describes a procedure that appropriately copies the entries of $\bv$, so that the respective finite element functions, corresponding to $\bv$ and $\cI_h \bv$, can be viewed as coinciding in $H^1(\Omega)$. That is, in a sense, $\cI_h$ is an injection (embedding) of $\cU^h$ into $\cE^h \times \cF^h$. Considering the respective matrices, $\cI_h$ has the fill-in pattern of $\Pi_h^T$, but all nonzero entries are replaced by 1.

It is easy to see that $\Pi_h \cI_h = I$, the identity operator on $\cU^h$, implying that $\Pi_h$ is surjective, i.e., it has a full row rank. Moreover, for any $\bv \in \cU^h$, $\cI_h \bv \in \cE^h \times \cF^h$ (exactly) approximates $\bv$ in the sense
\begin{equation}\label{eq:approx}
\bv - \Pi_h \cI_h \bv = \bzero,
\end{equation}
and $\cI_h \bv$ is ``energy'' stable, since
\begin{equation}\label{eq:stable}
(\cI_h \bv)^T \cA\, \cI_h \bv = \bv^T A \bv.
\end{equation}
This is to be expected, since, in a sense, $\cE^h \times \cF^h$ ``includes'' $\cU^h$.

The more challenging task is to show the continuity of $\Pi_h\col \cE^h \times \cF^h \mapsto \cU^h$ in terms of the respective ``energy'' norms. This is addressed next.

\begin{lemma}\label{lem:approx}
The operator $E_h\col \cE^h \times \cF^h \mapsto \cE^h \times \cF^h$, defined as $E_h = \cI_h\Pi_h - I$, where $I$ is the identity on $\cE^h \times \cF^h$, is bounded in the sense
\[
(\cI_h\Pi_h \bhv - \bhv)^T \cA (\cI_h\Pi_h \bhv - \bhv) \le (1 + \Lambda\delta\varkappa^2)\, \bhv^T \cA \bhv,
\]
for all $\bhv \in \cE^h \times \cF^h$, where $\delta$ is the one in \eqref{eq:localip}, $\varkappa$ is from \eqref{eq:meshreg}, and $\Lambda > 0$ is a constant, independent of $h$, $H$, the coefficient $\kappa$ in \eqref{eq:pde}, and the regularity of $\cT^h$ (see \cref{rem:const}).
\end{lemma}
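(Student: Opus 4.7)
The plan is to set $\bw \coloneqq \cI_h \Pi_h \bhv - \bhv \in \cE^h \times \cF^h$ and, writing $\bhv = (\bv_e, \bv_b)$ and $\bw = (\bw_e, \bw_b)$ as in \eqref{eq:localip}, bound $\bw^T \cA \bw$ by splitting it into a ``stiffness'' part $\sum_T (\bw_e|_T)^T A_T (\bw_e|_T)$ and an ``interior penalty'' part $\tfrac{1}{\delta}\sum_T\sum_{F\subset\partial T}(\bw_e|_F - \bw_b|_F)^T D_F (\bw_e|_F - \bw_b|_F)$. The crucial starting remark is that for every adof $j\in J_l$,
\[
(\bw)_j = (\Pi_h \bhv)_l - (\bhv)_j,
\]
so $(\bw)_j$ records the deviation of $(\bhv)_j$ from the adof-average at the dof $l$. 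Applied to an edof $j_T$ of $T$ and a bdof $j_F$ of $F\subset\partial T$ that are clones of the same $l$, this yields $(\bw_e)_{j_T} - (\bw_b)_{j_F} = -[(\bv_e)_{j_T} - (\bv_b)_{j_F}]$, the common mean $(\Pi_h \bhv)_l$ cancelling. Consequently, the penalty part of $\bw^T \cA \bw$ coincides \emph{identically} with that of $\bhv^T \cA \bhv$, and is therefore bounded by $\bhv^T \cA \bhv$; this supplies the ``$1$'' in the stated constant.

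It then remains to control the stiffness part by $\Lambda \delta \varkappa^2\, \bhv^T \cA \bhv$. I would first invoke the elementwise inverse inequality $A_T \preceq C_\tau \diag(A_T) \preceq C_\tau D_T$, where $D_T$ is the restriction of $\diag(A)$ to the edofs of $T$ and $C_\tau$ depends only on the polynomial degree and the shape regularity of $\cT^h$; this holds with an $h$-independent $C_\tau$ because $A_T = \sum_{\tau \subset T} A_\tau$ inherits the single-element inverse inequality summand by summand, and with a $\kappa$-independent $C_\tau$ because $A_\tau$ and $\diag(A_\tau)$ carry the same coefficient dependence on each fine element. Starting from
\[
(\bw_e)_{j_T} = \frac{1}{|J_l|}\sum_{k\in J_l}\bigl[(\bhv)_k - (\bv_e)_{j_T}\bigr],
\]
applying Cauchy--Schwarz, and expressing each difference $(\bhv)_k - (\bv_e)_{j_T}$ as a telescoping sum of at most $O(\varkappa)$ elementary edof--bdof jumps through the ``star'' of {\Elements} and {\Faces} meeting at $l$ (each of which appears in the penalty part of $\bhv^T \cA \bhv$ with the identical diagonal weight $D_{ll}$, since $\diag(A)$ is constant across all clones of $l$), one obtains a pointwise estimate of $(\bw_e|_T)_{j_T}^{\,2}$ by a weighted sum of such jumps. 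Multiplying by $D_{ll}$ and summing over all edofs of all $T\in\cT^H$ produces one factor of $\varkappa$ from the chain length and a second from the per-dof fan-out, together with a factor $\delta$ matching the $\tfrac{1}{\delta}$ normalization of the penalty term.

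The main obstacle will be the combinatorial bookkeeping: exhibiting, for each ordered pair of clones $j_T, k \in J_l$, an explicit chain of elementary edof--bdof jumps through the star of {\Elements} and {\Faces} incident to $l$, and then bounding by $O(\varkappa)$ the number of times each individual jump is re-used across the global sum -- it is this second counting step that prevents the estimate from degrading to $\varkappa^3$ and yields the sharper $\varkappa^2$. A secondary, more delicate point is to keep $\Lambda$ independent of $\kappa$, which rests on the fact that both sides of $A_T \preceq C_\tau D_T$ inherit the same coefficient dependence and that the weight $D_{ll}$ extracted from the chain argument matches exactly the penalty weight appearing in $\bhv^T \cA \bhv$.
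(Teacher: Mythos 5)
Your proposal is correct and follows essentially the same route as the paper's proof: both exploit the cancellation of $(\Pi_h\bhv)_l$ in the edof--bdof differences to see that the penalty part of $(\cI_h\Pi_h\bhv-\bhv)^T\cA(\cI_h\Pi_h\bhv-\bhv)$ equals that of $\bhv^T\cA\bhv$ exactly (supplying the ``1''), then bound the remaining stiffness part by the diagonal with constant $\Lambda$ and control the resulting oscillation of $\bhv$ around its adof-average at each dof by chaining elementary edof--bdof jumps in the star of $l$, picking up $\varkappa$ for the chain length and another $\varkappa$ for the fan-out (the paper realizes this via $|J_l^s|\,d_l\,(M_l-m_l)^2\le\varkappa^2 d_l\sum_j\sum_{k\in N_j^b}[(\bhv_s)_j-(\bhv_b)_k]^2$), with the explicit $\delta$ closing the gap to the $1/\delta$-weighted penalty term. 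The only cosmetic differences are that the paper introduces an explicit $i/s/r/b$ block splitting of $A$, $\cA$, $\Pi_h$, $\cI_h$ to organize the computation, and derives the per-dof bound via the min/max device $(M_l-m_l)^2$ rather than your pointwise Cauchy--Schwarz on the average, but these are interchangeable.
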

\begin{proof}
The portions of auxiliary space vectors corresponding to edofs and bdofs are respectively indexed by ``$e$'' and ``$b$'', leading to the notation, for $\bhv \in \cE^h \times \cF^h$, $\bhv^T = [\bhv_e^T, \bhv_b^T]^T$, where $\bhv_e \in \cE^h$ and $\bhv_b \in \cF^h$. By further splitting $\bhv_e^T = [\bhv_i^T, \bhv_s^T]^T$, it is obtained $\bhv^T = [\bhv_e^T, \bhv_b^T]^T = [\bhv_i^T, \bhv_s^T, \bhv_b^T]^T$, where ``$i$'' denotes the edofs in the interiors of all $T \in \cT^H$ and ``$s$'' are the edofs that can be mapped to some bdofs, based on the previously-described procedure of ``cloning'' dofs into edofs and bdofs. Analogously, the splitting $\bv^T = [\bv_i^T, \bv_r^T]^T$, for $\bv \in \cU^h$, is introduced in terms of dofs, indexed ``$i$'', in the interiors of all $T \in \cT^H$ and dofs, indexed ``$r$'', related to bdofs. \Cref{fig:nonconfsplitting} illustrates the utilized indexing. Note that there is a clear difference between ``$r$'' and ``$s$'', but also a clear relation. Locally, on $T \in \cT^H$, ``$r$'' and ``$s$'' can, in fact, be equated, but in a global setting, it is necessary to distinguish between ``$r$'' and ``$s$'' indices. This should not cause any ambiguity below. Based on the splittings, define
\begin{align}
A &=
\begin{bmatrix}
A_{ii} &A_{ir}\\
A_{ri} &A_{rr}
\end{bmatrix},\nn\\
\cA &=
\begin{bmatrix}
\cA_{ee} &\cA_{eb}\\
\cA_{be} &\cA_{bb}
\end{bmatrix}
=
\begin{bmatrix}
\cA_{ii} &\cA_{is}\\
\cA_{si} &\cA_{ss} &\cA_{sb}\\
&\cA_{bs} &\cA_{bb}
\end{bmatrix},\label{eq:cA}\\
\Pi_h &=
\begin{bmatrix}
\Pi_{ii}\\
&\Pi_{rs} &\Pi_{rb}
\end{bmatrix},\nn\\
\cI_h &=
\begin{bmatrix}
I\\
&\cI_{sr}\\
&\cI_{br}
\end{bmatrix},\nn
\end{align}
where $\cI_{sr}$ is the map from ``$r$'' to ``$s$'' indices and $\cI_{br}$ -- from ``$r$'' to ``$b$'' indices. Note that $\cI_{sr}$ is a matrix with the fill-in structure of $\Pi_{rs}^T$, where all nonzero entries are replaced by 1, each row has exactly one nonzero entry and each column has at least two nonzero entries (exactly two when the respective dofs are in the interior of a {\Face}). Similarly, $\cI_{br}$ is a matrix with the fill-in of $\Pi_{rb}^T$, where all nonzero entries are replaced by 1, each row has exactly one nonzero entry and each column has at least one nonzero entry (exactly one when the respective dofs are in the interior of a {\Face}). Observe that, assuming everything is consistently numbered and ordered, $\cA_{ii} = A_{ii}$ and $\Pi_{ii} = I$. It holds that $A_{ir} = \cA_{is}\cI_{sr}$. This is due to the fact that $A_{ir}$ and $\cA_{is}$ represent the same ``one-sided connections'' (meaning that the ``connections'' are on one side of the {\Faces} $F\in \Phi^H$), but in terms of different indices.

Let $\bv = \Pi_h \bhv = \lb \bhv_i^T, (\Pi_{rs} \bhv_s + \Pi_{rb} \bhv_b)^T \rb^T$. Then,
\begin{align*}
\cI_h \bv &= \cI_h \Pi_h \bhv = \lb \bhv_i^T, [\cI_{sr}(\Pi_{rs} \bhv_s + \Pi_{rb} \bhv_b)]^T, [\cI_{br}(\Pi_{rs} \bhv_s + \Pi_{rb} \bhv_b)]^T \rb^T,\\
\btv = \cI_h \bv - \bhv &= \lb \bzero^T, [\cI_{sr}(\Pi_{rs} \bhv_s + \Pi_{rb} \bhv_b) - \bhv_s]^T, [\cI_{br}(\Pi_{rs} \bhv_s + \Pi_{rb} \bhv_b) - \bhv_b]^T \rb^T,
\end{align*}
and
\begin{align*}
\btv^T \cA \btv &= \frac{1}{\delta}\sum_{T\in\cT^H}\sum_{F\subset \partial T} \lp \restr{\bhv_s}{F} - \restr{\bhv_b}{F} \rp^T D_F \lp \restr{\bhv_s}{F} - \restr{\bhv_b}{F} \rp\\
&\hphantom{=} {} + \sum_{T\in\cT^H} [\restr{(\Pi_{rs} \bhv_s + \Pi_{rb} \bhv_b)}{T} - \restr{\bhv_s}{T}]^T A_{T,rr} [\restr{(\Pi_{rs} \bhv_s + \Pi_{rb} \bhv_b)}{T} - \restr{\bhv_s}{T}],
\end{align*}
where $A_{T,rr}$ is the local version of $A_{rr}$ and it is utilized that locally, as noted above, ``$r$'' and ``$s$'' can be identified. Using that the stiffness matrices can be bounded from above by their diagonals, with some constant $\Lambda > 0$, it follows:
\begin{small}
\begin{equation}\label{eq:bdronlyestimate}
\begin{split}
\btv^T \cA \btv &\le \frac{1}{\delta}\sum_{T\in\cT^H}\sum_{F\subset \partial T} \lp \restr{\bhv_s}{F} - \restr{\bhv_b}{F} \rp^T D_F \lp \restr{\bhv_s}{F} - \restr{\bhv_b}{F} \rp\\
&\hphantom{\le} {} + \Lambda \sum_{T\in\cT^H}\sum_{F\subset \partial T} [\restr{(\Pi_{rs} \bhv_s + \Pi_{rb} \bhv_b)}{F} - \restr{\bhv_s}{F}]^T D_F [\restr{(\Pi_{rs} \bhv_s + \Pi_{rb} \bhv_b)}{F} - \restr{\bhv_s}{F}].
\end{split}
\end{equation}
\end{small}%

Let, for a dof $l$ from the ``$r$'' dofs, $J^s_l$ denote the set of related ``$s$'' edofs and $J^b_l$ -- the set of related bdofs; illustrated in \cref{fig:dofcategories}. That is, $J_l = J^s_l\cup J^b_l$ with $J^s_l$ corresponding to the $l$-th column of $\cI_{sr}$ and $J^b_l$ -- to the $l$-th column of $\cI_{br}$. Furthermore, let $d_l$ be the respective diagonal entry in $A$, $N^b_j$ denote, for all $j \in J^s_l$, the ``$b$'' neighbors (see \cref{fig:dofneighbors}) of $j$, according to the connectivity of $\cA_{sb}$, and $m_l$, $M_l$ are respectively the minimum and maximum values of $\bhv$ on the adofs in $J_l$. Clearly,
\[
d_l \sum_{j\in J_l^s}\lb \frac{1}{\lv J_l \rv}\lp \sum_{k\in J_l^s}(\bhv_s)_k + \sum_{p\in J_l^b}(\bhv_b)_p \rp - (\bhv_s)_j \rb^2 \le \lv J_l^s \rv d_l \lp M_l - m_l \rp^2.
\]
Notice that the adofs corresponding to $M_l$ and $m_l$ are connected via a path, with respect to the connectivity of $\cA_{sb}$, whose length is bounded by $|J_l|$. Following along this path, applying the triangle inequality and \eqref{eq:meshreg}, it holds
\[
\lv J_l^s \rv d_l \lp M_l - m_l \rp^2 \le \varkappa^2 d_l \sum_{j\in J_l^s} \sum_{k\in N_j^b} \lb (\bhv_s)_j - (\bhv_b)_k \rb^2.
\]
Hence,
\[
d_l \sum_{j\in J_l^s}\lb \frac{1}{\lv J_l \rv}\lp \sum_{k\in J_l^s}(\bhv_s)_k + \sum_{p\in J_l^b}(\bhv_b)_p \rp - (\bhv_s)_j \rb^2 \le \varkappa^2 d_l \sum_{j\in J_l^s} \sum_{k\in N_j^b} \lb (\bhv_s)_j - (\bhv_b)_k \rb^2.
\]
Summing over $l$ in the last inequality provides
\begin{align*}
&\sum_{T\in\cT^H}\sum_{F\subset \partial T} [\restr{(\Pi_{rs} \bhv_s + \Pi_{rb} \bhv_b)}{F} - \restr{\bhv_s}{F}]^T D_F [\restr{(\Pi_{rs} \bhv_s + \Pi_{rb} \bhv_b)}{F} - \restr{\bhv_s}{F}]\\
&\le \varkappa^2 \sum_{T\in\cT^H}\sum_{F\subset \partial T} \lp \restr{\bhv_s}{F} - \restr{\bhv_b}{F} \rp^T D_F \lp \restr{\bhv_s}{F} - \restr{\bhv_b}{F} \rp.
\end{align*}
Combining the last estimate and \eqref{eq:bdronlyestimate} implies
\begin{align*}
\btv^T \cA \btv &\le (1+\Lambda\delta\varkappa^2)\frac{1}{\delta}\sum_{T\in\cT^H}\sum_{F\subset \partial T} \lp \restr{\bhv_s}{F} - \restr{\bhv_b}{F} \rp^T D_F \lp \restr{\bhv_s}{F} - \restr{\bhv_b}{F} \rp\\
& \le (1 + \Lambda\delta\varkappa^2)\, \bhv^T \cA \bhv.\qedhere
\end{align*}
\end{proof}

\begin{corollary}\label{cor:cont}
The operator $\Pi_h\col \cE^h \times \cF^h \mapsto \cU^h$ is continuous in the sense
\[
(\Pi_h \bhv)^T A \, \Pi_h \bhv \le 2(2 + \Lambda\delta\varkappa^2)\, \bhv^T \cA \bhv,
\]
for all $\bhv \in \cE^h \times \cF^h$, where the constants are the same as in \cref{lem:approx}.
\end{corollary}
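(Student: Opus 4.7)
The plan is to combine the stability identity \eqref{eq:stable} with the bound from \cref{lem:approx}, using a triangle-inequality argument in the $\cA$-seminorm. The key observation is that $\Pi_h\bhv \in \cU^h$, so the identity \eqref{eq:stable} applies with $\bv = \Pi_h\bhv$, giving
\[
(\Pi_h\bhv)^T A\,\Pi_h\bhv \;=\; (\cI_h\Pi_h\bhv)^T \cA\, \cI_h\Pi_h\bhv.
\]
This converts the left-hand side of the corollary into a quantity that lives entirely in the auxiliary space $\cE^h\times\cF^h$.

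Next, I would write the decomposition $\cI_h\Pi_h\bhv = (\cI_h\Pi_h\bhv - \bhv) + \bhv$ and apply the triangle inequality for the $\cA$-seminorm $\|\cdot\|_\cA \ceq (\cdot^T\cA\,\cdot)^{1/2}$ (note that $\cA$ is SPD, so this is a genuine norm). Squaring and using the elementary inequality $(a+b)^2 \le 2a^2 + 2b^2$ gives
\[
(\cI_h\Pi_h\bhv)^T \cA\, \cI_h\Pi_h\bhv \;\le\; 2\,(\cI_h\Pi_h\bhv - \bhv)^T \cA\,(\cI_h\Pi_h\bhv - \bhv) \;+\; 2\,\bhv^T \cA\,\bhv.
\]
The first term on the right is controlled by \cref{lem:approx}, which bounds it by $2(1+\Lambda\delta\varkappa^2)\,\bhv^T\cA\,\bhv$. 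Adding the second term $2\,\bhv^T\cA\,\bhv$ produces the factor $2(2+\Lambda\delta\varkappa^2)$ claimed in the statement.

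There is essentially no obstacle here: all the real work was done in \cref{lem:approx}, and the corollary is a two-line consequence of that lemma combined with the energy-preservation identity \eqref{eq:stable}. The only thing to double-check when writing it out is that $\cA$ is indeed positive definite (which is noted after \eqref{eq:cAT}), so that the triangle inequality legitimately applies.
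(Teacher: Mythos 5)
Your proof is correct and uses the same two ingredients as the paper: the energy-preservation identity \eqref{eq:stable} applied to $\bv = \Pi_h\bhv$, followed by \cref{lem:approx} on the difference $\cI_h\Pi_h\bhv - \bhv$. The only cosmetic difference is how the triangle inequality is invoked: the paper uses the reverse triangle inequality $\lV \cI_h\Pi_h\bhv \rV_\cA - \lV \bhv\rV_\cA \le \lV \cI_h\Pi_h\bhv - \bhv\rV_\cA$, which gives the slightly sharper intermediate constant $(1+\sqrt{1+\Lambda\delta\varkappa^2})^2$ and then relaxes to the stated $2(2+\Lambda\delta\varkappa^2)$, whereas you use the forward decomposition and $(a+b)^2 \le 2a^2+2b^2$, which lands exactly on the stated constant. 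These are interchangeable; the argument is essentially identical.
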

\begin{proof}
Owing to \eqref{eq:stable}, the reverse triangle inequality, and \cref{lem:approx}, it follows:
\begin{align*}
[(\Pi_h \bhv)^T A \, \Pi_h \bhv]^\half - [\bhv^T \cA \bhv]^\half &= [(\cI_h\Pi_h \bhv)^T \cA \, \cI_h\Pi_h \bhv]^\half - [\bhv^T \cA \bhv]^\half\\
&\le [(\cI_h\Pi_h \bhv - \bhv)^T \cA (\cI_h\Pi_h \bhv - \bhv)]^\half\\
&\le \sqrt{1 + \Lambda\delta\varkappa^2} [\bhv^T \cA \bhv]^\half.\qedhere
\end{align*}
\end{proof}

\begin{figure}
\centering
\includegraphics[width=1.00\textwidth]{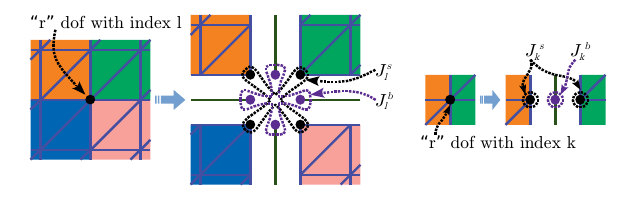}
\caption[]{An illustration of the sets of adofs, associated with an ``$r$'' dof, used in the proof of \cref{lem:approx}.}\label{fig:dofcategories}
\end{figure}

\begin{figure}
\centering
\includegraphics[width=0.8\textwidth]{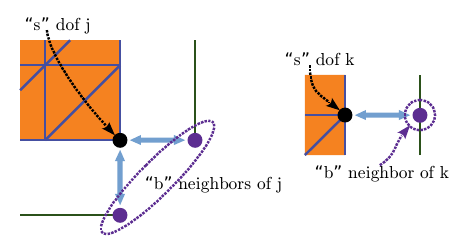}
\caption[]{An illustration of the ``$b$'' neighbors, according to the connectivity of $\cA_{sb}$ in \eqref{eq:cA}, of an ``$s$'' dof. These are designated as $N_j^b$ and $N_k^b$ and used in the proof of \cref{lem:approx}.}\label{fig:dofneighbors}
\end{figure}

\begin{remark}\label{rem:const}
The independence of the constants, in the bounds in \cref{lem:approx,cor:cont}, on the coefficient $\kappa$ in \eqref{eq:pde} holds, since the coefficient information is contained in $D_F$ in \eqref{eq:localip}. In fact, the use of the diagonal entries of the global $A$ is crucial. However, the particular argument does not exclude dependence of the constants on the order of the finite element spaces. This is due to $\Lambda$ in \eqref{eq:bdronlyestimate} depending on the maximum number of dofs in an element. This can be mitigated in the analysis, if necessary, via ``strengthening'' the interface terms in \eqref{eq:localip} by using a properly scaled version of the diagonal of $A$ (e.g., adjusting the value of $\delta$), or employing a so called weighted $\ell_1$-smoother (see \cite{2011Smoothers}) in lieu of the diagonal of $A$ for the interface term in \eqref{eq:localip}, which would lead to $\Lambda = 1$. Note that, interestingly, the numerical results in \cref{ssec:highorder} (where the unweighted diagonal of $A$ is used) do not demonstrate a practical necessity for adjusting the interface terms, when increasing the polynomial order. This indicates that a more intricate proof, that does not depend on the constant $\Lambda$ (i.e., on the polynomial order), may be possible. Currently, this is an open question.
\end{remark}

Finally, based on the above properties, the optimality of the auxiliary space preconditioners can be established.

\begin{theorem}[spectral equivalence]\label{thm:ipopt}
Let the smoother $M$ satisfy the property that $M + M^T - A$ is SPD.
%coercivity property
%\begin{equation}\label{eq:smoothcoerc}
%\bv^T A \bv \le \eta\,\bv^T (M + M^T - A) \bv,\quad\forall \bv \in \cU^h,
%\end{equation}
%for some constant $\eta > 0$.
Assume that $\cB$ in \eqref{eq:Badd} and \eqref{eq:Bmult} is a spectrally equivalent preconditioner for the IP matrix $\cA$, in the sense that there exist positive constants $\alpha$ and $\beta$, such that
\begin{equation}\label{eq:spectequiv}
\alpha^\mone\, \bhv^T \cA \bhv \le \bhv^T \cB \bhv \le \beta\, \bhv^T \cA \bhv,\quad\forall\bhv \in \cE^h \times \cF^h.
\end{equation}
Then, the additive and multiplicative auxiliary space preconditioners, $B_{\mathrm{add}}$ in \eqref{eq:Badd} and $B_{\mathrm{mult}}$ in \eqref{eq:Bmult}, are spectrally equivalent to $A$ in \eqref{eq:H1linsys}.
\end{theorem}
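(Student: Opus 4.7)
The plan is to invoke the abstract auxiliary (fictitious) space framework of \cite[Theorem 7.18]{VassilevskiMG}, whose hypotheses are essentially (a) a transfer operator $\Pi_h \colon \cE^h \times \cF^h \to \cU^h$ that is continuous in the respective energy norms, (b) a stable decomposition property stating that each $\bv \in \cU^h$ admits an auxiliary-space preimage whose $\cA$-energy is controlled by $\bv^T A \bv$, (c) a spectrally equivalent auxiliary-space solver $\cB \sim \cA$, and (d) a smoother $M$ whose symmetrization gives the usual smoothing property. Once (a)--(d) are in place, the theorem delivers the desired uniform spectral equivalence of both $B_{\mathrm{add}}$ and $B_{\mathrm{mult}}$ with $A$.

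Step one is to verify the continuity requirement (a): this is exactly the content of \cref{cor:cont}, which yields $(\Pi_h \bhv)^T A\, \Pi_h \bhv \le 2(2 + \Lambda \delta \varkappa^2)\, \bhv^T \cA \bhv$ with a constant independent of $h$. Step two is to verify the stable decomposition (b): for any $\bv \in \cU^h$ I would take the explicit preimage $\bhv \ceq \cI_h \bv \in \cE^h \times \cF^h$; then \eqref{eq:approx} gives $\Pi_h \bhv = \bv$ (so the decomposition is not only stable but exact), while \eqref{eq:stable} gives the sharp identity $\bhv^T \cA \bhv = \bv^T A \bv$, providing stability with constant $1$. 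Step three merely identifies (c) with the assumption \eqref{eq:spectequiv}, and (d) with the hypothesis that $M + M^T - A$ is SPD, which guarantees that $\ol{M} = M(M+M^T-A)^{-1}M^T$ is well defined, symmetric, and satisfies $\bv^T A \bv \le \bv^T \ol{M} \bv$, so that the standard smoothing property follows in the usual way.

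With these four ingredients collected, applying the abstract theorem produces bounds of the form $c_1\, \bv^T A \bv \le \bv^T B_{\mathrm{add}} \bv \le c_2\, \bv^T A \bv$, where the constants depend only on $\alpha$, $\beta$, the continuity constant $2(2 + \Lambda\delta\varkappa^2)$ from \cref{cor:cont}, and the smoothing constants of $\ol{M}$. For the multiplicative case, the same framework applies via the standard error-propagation identity that factors the two-level iteration into pre-smoothing, auxiliary-space correction, and post-smoothing; combining (a)--(d) then yields the spectral equivalence of $B_{\mathrm{mult}}$ with $A$.

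Because the technically demanding part, namely the energy continuity of $\Pi_h$ via the indirect route through $\cI_h \Pi_h$, has already been carried out in \cref{lem:approx}, and because the remaining ingredients are either built-in identities (\eqref{eq:approx}, \eqref{eq:stable}) or hypotheses of the theorem, I anticipate no genuine obstacle: the proof is essentially a translation of the results of \cref{ssec:analysis} into the language of the auxiliary space lemma. The only small care needed is in tracking the explicit constants to make the spectral equivalence quantitative and to confirm that the resulting bounds are indeed independent of $h$, of the coefficient $\kappa$, and of the agglomeration scale $H$.
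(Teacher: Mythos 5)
Your overall direction is sound — the theorem is indeed proved by assembling the stability \eqref{eq:stable}, the exactness of the approximation \eqref{eq:approx}, the continuity of $\Pi_h$ from \cref{cor:cont}, the spectral equivalence \eqref{eq:spectequiv}, and the $A$-convergence of $M$. However, there is a genuine gap in the plan as stated: you propose to cite \cite[Theorem~7.18]{VassilevskiMG} as a black box, with hypothesis ``(d) a smoother $M$ whose symmetrization gives the usual smoothing property.'' The paper's own remark following \cref{thm:ipopt} warns precisely against this: Theorem~7.18 carries a couple of \emph{additional} assumptions on $M$ which are \emph{not} hypotheses of \cref{thm:ipopt} and cannot simply be quoted. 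Those extra assumptions are what normally control the smoother-captured component of an inexact stable decomposition; they become vacuous here exactly because the decomposition through $\cI_h$ is exact, but establishing that requires going through the argument, not merely invoking the abstract statement. As written, your proof rests on unverified (and, under the stated hypotheses, unprovable) premises.

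What the paper actually does — and what you should do — is give a self-contained two-step argument. First prove that the bare fictitious-space operator $\wtB^\mone = \Pi_h \cB^\mone \Pi_h^T$ is spectrally equivalent to $A$: from \eqref{eq:approx}, \eqref{eq:stable}, \eqref{eq:spectequiv}, and Cauchy--Schwarz one gets $\bv^T A^\mone\bv \le \beta\, \bv^T\wtB^\mone\bv$, and from \cref{cor:cont} together with \eqref{eq:spectequiv} and Cauchy--Schwarz one gets $\bv^T\wtB^\mone\bv \le 2\alpha(2+\Lambda\delta\varkappa^2)\, \bv^T A^\mone\bv$. Note this already proves the fictitious space lemma in this setting with no smoother at all. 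Second, incorporate $M$ using only $M+M^T-A$ SPD, which is equivalent to $\bv^T A\bv \le 2\,\bv^T\ol M\bv$ and to $\lV I - M^\mone A\rV_A < 1$. For $B_{\mathrm{add}}$ one combines these inequalities additively. For $B_{\mathrm{mult}}$ the upper bound on $\bv^T B_{\mathrm{mult}}^\mone\bv$ follows by bounding the spectral norm of $(I - A^\half M^{-T}A^\half)A^\half\wtB^\mone A^\half(I - A^\half M^\mone A^\half)$, while the lower bound exploits the identity $(I - A^\half M^{-T}A^\half)(I - A^\half M^\mone A^\half) = I - A^\half\ol M^\mone A^\half$ and $\xi = \max\{\beta,1\}$. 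You should replace the black-box citation with these explicit estimates; they are short, and they are what the weakened hypotheses actually support.
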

\begin{proof}
Owing to \eqref{eq:spectequiv}, \eqref{eq:stable} and \cref{cor:cont} provide, respectively,
\begin{alignat}{3}
(\cI_h \bv)^T \cB\, \cI_h \bv &\le \beta\,\bv^T A \bv,\quad&&\forall \bv \in \cU^h,\label{eq:stableB}\\
(\Pi_h \bhv)^T A \, \Pi_h \bhv &\le 2\alpha(2 + \Lambda\delta\varkappa^2)\, \bhv^T \cB \bhv,\quad&&\forall\bhv \in \cE^h \times \cF^h.\label{eq:contB}
\end{alignat}

First, consider the ``fictitious space preconditioner'' $\wtB^\mone = \Pi_h \cB^\mone \Pi_h^T$ for $A$. It is clearly SPD, when $\cB$ is SPD (implied by \eqref{eq:spectequiv}), due to the full row rank of $\Pi_h$. By \eqref{eq:approx}, the Cauchy–Schwarz inequality, and \eqref{eq:stableB}, it follows:
\begin{align*}
\bv^T A \bv &= (\cI_h \bv)^T \cB \cB^\mone \Pi_h^T A \bv \le [(\cI_h \bv)^T \cB\, \cI_h \bv]^\halff [\bv^T A \Pi_h \cB^\mone \cB \cB^\mone \Pi_h^T A \bv]^\halff\\
&\le \sqrt{\beta} [\bv^T A \bv]^\halff [\bv^T A \wtB^\mone A \bv]^\halff.
\end{align*}
Thus, $\bv^T A \bv \le \beta\, \bv^T A \wtB^\mone A \bv$, showing
\begin{equation}\label{eq:fictstable}
\bv^T A^\mone \bv \le \beta\, \bv^T \wtB^\mone \bv,\quad\forall \bv \in \cU^h,
\end{equation}
which is equivalent to $\bv^T \wtB \bv \le \beta\, \bv^T A \bv$. Conversely, owing to the Cauchy–Schwarz inequality and \eqref{eq:contB}, (denoting $\gamma = 2\alpha(2 + \Lambda\delta\varkappa^2)$) it follows:
\begin{small}
\begin{align*}
\bv^T A \wtB^\mone A \bv &\le [\bv^T A \bv]^\halff [\bv^T A \wtB^\mone A \wtB^\mone A \bv]^\halff\\
&= [\bv^T A \bv]^\halff [ (\Pi_h \cB^\mone \Pi_h^T A \bv)^T A \Pi_h \cB^\mone \Pi_h^T A \bv]^\halff\\
&\le \sqrt{\gamma}[\bv^T A \bv]^\halff [ \bv^T A \Pi_h \cB^\mone  \cB \cB^\mone \Pi_h^T A \bv]^\halff = \sqrt{\gamma}[\bv^T A \bv]^\halff [ \bv^T A \wtB^\mone A \bv]^\halff.
\end{align*}
\end{small}%
Whence, $\bv^T A \wtB^\mone A \bv \le 2\alpha(2 + \Lambda\delta\varkappa^2)\, \bv^T A \bv$ and
\begin{equation}\label{eq:fictcont}
\bv^T \wtB^\mone \bv \le 2\alpha(2 + \Lambda\delta\varkappa^2)\, \bv^T A^\mone \bv,\quad\forall \bv \in \cU^h,
\end{equation}
or, equivalently, $\bv^T A \bv \le 2\alpha(2 + \Lambda\delta\varkappa^2)\, \bv^T \wtB \bv$.

Next, consider $B^\mone_{\mathrm{add}}$ in \eqref{eq:Badd}. In view of the positive definiteness of the smoother in \eqref{eq:Badd}, \eqref{eq:fictstable} implies
\[
\bv^T A^\mone \bv \le \beta\, \bv^T B^\mone_{\mathrm{add}} \bv,\quad\forall \bv \in \cU^h,
\]
equivalently, $\bv^T B_{\mathrm{add}} \bv \le \beta\, \bv^T A \bv$. Conversely, using that $M + M^T - A$ is SPD and the fact that this is equivalent to $2\ol{M} - A$ being SPD \cite{VassilevskiMG}, it holds that 
\begin{equation}\label{eq:olM}
\bv^T A \bv \le 2\,\bv^T \ol{M} \bv,\quad\forall \bv \in \cU^h.
\end{equation}
Similarly, in case $M$ is symmetric and $M^\mone$ is used in \eqref{eq:Badd}, it holds $\bv^T A \bv \le 2\,\bv^T M \bv$. This, combined with \eqref{eq:fictcont}, in view of \eqref{eq:Badd}, implies
\[
\bv^T B^\mone_{\mathrm{add}} \bv \le 2[\alpha(2 + \Lambda\delta\varkappa^2) + 1]\, \bv^T A^\mone \bv,\quad\forall \bv \in \cU^h,
\]
equivalently, $\bv^T A \bv \le 2[\alpha(2 + \Lambda\delta\varkappa^2) + 1]\, \bv^T B_{\mathrm{add}} \bv$.

Finally, consider $B^\mone_{\mathrm{mult}}$ in \eqref{eq:Bmult}. Note that $M + M^T - A$ being SPD is equivalent to the stationary iteration with $M$ (and $M^T$) being $A$-convergent (convergent in the norm $\lV\cdot\rV_A$, induced by the matrix $A$), i.e., $\lv I - A^\halff M^\mone A^\halff \rv = \lV I - M^\mone A \rV_A = \lv I - A^\halff M^{-T} A^\halff \rv = \lV I - M^{-T} A \rV_A < 1$, where $\lv\cdot \rv$ is the Euclidean norm; see, e.g., \cite{VassilevskiMG}. Then, using \eqref{eq:fictcont},
\begin{small}
\begin{align*}
&\hphantom{={}}\sup_{\bv \ne \bzero} \frac{\bv^T (I - M^{-T} A) \wtB^\mone (I - A M^{-1}) \bv}{\bv^T A^\mone \bv} = \sup_{\bv \ne \bzero} \frac{\bv^T A^\half (I - M^{-T} A) \wtB^\mone (I - A M^{-1}) A^\half \bv}{\bv^T \bv}\\
&= \sup_{\bv \ne \bzero} \frac{\bv^T (I - A^\half M^{-T} A^\half)A^\half \wtB^\mone A^\half (I - A^\half M^{-1}A^\half) \bv}{\bv^T \bv} \\
&= \lv (I - A^\half M^{-T} A^\half)A^\half \wtB^\mone A^\half (I - A^\half M^{-1}A^\half) \rv \le \lv I - A^\half M^{-1}A^\half \rv^2 \lv A^\half \wtB^\mone A^\half \rv\\
&\le \lv A^\half \wtB^\mone A^\half \rv = \sup_{\bv \ne \bzero}\frac{\bv^T A^\half \wtB^\mone A^\half \bv}{\bv^T \bv} = \sup_{\bv \ne \bzero}\frac{\bv^T \wtB^\mone \bv}{\bv^T A^\mone \bv} \le 2\alpha(2 + \Lambda\delta\varkappa^2).
\end{align*}
\end{small}%
This and \eqref{eq:olM}, in view of \eqref{eq:Bmult}, show
\[
\bv^T B^\mone_{\mathrm{mult}} \bv \le 2[\alpha(2 + \Lambda\delta\varkappa^2) + 1]\, \bv^T A^\mone \bv,\quad\forall \bv \in \cU^h,
\]
equivalently, $\bv^T A \bv \le 2[\alpha(2 + \Lambda\delta\varkappa^2) + 1]\, \bv^T B_{\mathrm{mult}} \bv$. Conversely, using the definition \eqref{eq:Bmult} of $B^\mone_{\mathrm{mult}}$, \eqref{eq:fictstable}, and the positive definiteness of $\ol{M}$, and taking $\xi = \max\{\beta, 1 \}$, it holds
\begin{align*}
\bv^T B^\mone_{\mathrm{mult}} \bv &= \bv^T \ol{M}^\mone \bv + \bv^T (I - M^{-T}A) \wtB^\mone (I - A M^\mone) \bv\\
&\ge \bv^T \ol{M}^\mone \bv + \xi^\mone\,\bv^T (I - M^{-T}A) A^\mone (I - A M^\mone) \bv\\
&= \bv^T \ol{M}^\mone \bv + \xi^\mone\,\bv^T A^{-\half} (I - A^\half M^{-T} A^\half)(I - A^\half M^\mone A^\half) A^{-\half}\bv\\
&= \bv^T \ol{M}^\mone \bv + \xi^\mone\,\bv^T A^{-\half} (I - A^\half \ol{M}^\mone A^\half) A^{-\half}\bv\\
&= (1-\xi^\mone)\,\bv^T \ol{M}^\mone \bv + \xi^\mone\,\bv^T A^\mone \bv \ge \xi^\mone\,\bv^T A^\mone \bv,
\end{align*}
or, equivalently, $\bv^T B_{\mathrm{mult}} \bv \le \max\{\beta, 1 \}\, \bv^T A \bv$.
\end{proof}

\begin{remark}
There is a couple of additional assumptions on the smoother, $M$, in \cite[Theorem 7.18]{VassilevskiMG}. However, they are not necessary in \cref{thm:ipopt}, due to the exactness of the approximation \eqref{eq:approx}. Only the basic property of $A$-convergence of the iteration with $M$ (i.e., $M + M^T - A$ being SPD) is assumed. In fact, \eqref{eq:fictstable} and \eqref{eq:fictcont} show, counting on \eqref{eq:approx}, that $\wtB$ alone is spectrally equivalent to $A$, without the necessity of additional smoothing in the fine-scale setting here. Nevertheless, when coarse auxiliary spaces (described in \cref{ssec:ipcoarseform}) are utilized smoothing is necessary and a smoother, $M$, is outlined in \cref{sec:smoother}. Note that \eqref{eq:fictstable} and \eqref{eq:fictcont} essentially represent, based on the surjectivity of $\Pi_h$, the so called ``fictitious space lemma''; cf. \cite[Theorem 10.1]{2007DD}, \cite[Theorem 2.1]{1996AuxiliarySpace}
\end{remark}

\begin{remark}\label{rem:general}
Notice that the construction of the space $\cE^h\times \cF^h$ from $\cU^h$ and the proofs of \cref{lem:approx,cor:cont,thm:ipopt} are abstract, algebraic, and general in nature, using only generic properties of SPD and SPSD matrices. The degrees of freedom can be viewed in a general abstract sense, using only that they can be related to mesh entities (like elements, faces, edges, and vertices). A careful inspection of the proofs shows that the particular form of the model problem \eqref{eq:pde}, or \eqref{eq:weakform}, and its properties (particularly, that it is an elliptic PDE) are not utilized. Thus, the IP reformulation is applicable and its spectral equivalence properties are maintained for quite general SPD systems (i.e., convex quadratic minimization problems) that can be associated with appropriate local SPSD versions. In a general setting, the IP reformulation can be constructed as in \eqref{eq:localip}, providing the element-by-element assembly property for a given problem, via replacing $A_T$ and $D_F$ with the corresponding matrices associated with the respective convex quadratic minimization of interest. Consequently, the approach is not limited to preconditioning systems necessarily coming from an $H^1$-conforming, per se, finite element method and potentially it can even be applicable to systems that may not be associated with any finite element method. For example, in the case of Raviart-Thomas spaces \cite{BoffiMFE}, constructing $\cE^h\times \cF^h$ involves cloning all dofs associated with the fluxes on the faces constituting a {\Face} and, for N{\'e}d{\'e}lec spaces, the dofs associated with faces and their respective edges (e.g., in 3D) are cloned. Essentially, the overall procedure remains the same and the theoretical results (together with their proofs) carry over.
\end{remark}

\subsection{Coarse formulation}
\label{ssec:ipcoarseform}

Combining the considerations in \cref{ssec:ipformulation} and the end of \cref{ssec:spaces}, an IP formulation on a coarse pair of spaces, $\cE^H \times \cF^H$, can be obtained. Namely, having the prolongation matrix $\cP\col \cE^H \times \cF^H \mapsto \cE^h \times \cF^h$, the coarse IP matrix can be obtained via a standard ``RAP'' procedure. That is, $\cA^H = \cP^T \cA \cP$ and, locally, $\cA^H_T = \cP^T_T \cA_T \cP_T$, where $\cP_T$ is a local version of $\cP$, with columns -- the basis vectors associated with $T$ and all $F \subset \partial T$.

Alternatively, $\cA^H_T$ can be computed directly, using \eqref{eq:localip} with $\bv_e$ in the span of $\set{\bq_{T,i}}_{i=1}^{m_T}$ and $\bv_b$ in the subspace spanned by $\set{\bq_{F,i}}_{i=1}^{m_F}$, for all $F \subset \partial T$, while $\cA^H$ can be assembled from the local matrices, $\cA^H_T$. Respective consistent, with $\cA^H_T$, relations {\Element}_{\Edof}, {\Element}_{\Bdof}, and, consequently, {\Element}_{\Adof} are obtainable. This practically removes the necessity to construct the fine-scale space, $\cE^h \times \cF^h$, and matrices -- $\cA$ and $\cA_T$. Furthermore, the operator $\Pi_H\col \cE^H \times \cF^H \mapsto \cU^h$ is defined as $\Pi_H = \Pi_h \cP$ and can be constructed directly. Its continuity,
\[
(\Pi_H \bhv_c)^T A \, \Pi_H \bhv_c \le 2(2 + \Lambda\delta\varkappa^2) \bhv_c^T \cA^H \bhv_c,\quad \forall\bhv_c \in \cE^H \times \cF^H,
\]
follows immediately from \cref{cor:cont}. The coarse-space versions of the auxiliary space preconditioners for $A$, where $\cB^H$ is a symmetric preconditioner for $\cA^H$, are
\begin{equation}\label{eq:BH}
\begin{split}
(B_{\mathrm{add}}^H)^\mone &= \ol{M}^\mone + \Pi_H (\cB^H)^\mone \Pi_H^T,\\
(B_{\mathrm{mult}}^H)^\mone &= \ol{M}^\mone + (I - M^{-T}A)\Pi_H (\cB^H)^\mone \Pi_H^T (I - A M^\mone).
\end{split}
\end{equation}

A particular approach, considered here, for obtaining a coarse basis and constructing $\cE^H \times \cF^H$ is by solving local generalized eigenvalue problems, cf. \cite{2012SAAMGe}, of the type
\begin{equation}\label{eq:eval}
A_T \bq = \lambda D_T \bq,
\end{equation}
where $D_T$ is the diagonal of $A_T$ and the eigenvalues are ordered $\lambda_1 \le \dots \le \lambda_{n_T}$. The first $m_T$ ($1\le m_T < n_T$) eigenvectors of \eqref{eq:eval}, constitute the $D_T$-orthogonal basis $\set{\bq_{T,i}}_{i=1}^{m_T}$ associated with $T$. Particularly, let $\theta \in (0,1)$ be given and $m_T$ satisfy $\lambda_{m_T} \le \theta \lambda_{n_T}$ and $\lambda_{m_T+1} > \theta \lambda_{n_T}$. For each $F\in \Phi^H$, collect $\set{\restr{\bq_{T_+,i}}{F}}_{i=1}^{m_{T_+}}$ and $\set{\restr{\bq_{T_-,i}}{F}}_{i=1}^{m_{T_-}}$ from its adjacent {\Elements} $T_+$ and $T_-$. After performing SVD to filter out any linear dependence, the $\ell_2(F)$-orthogonal basis $\set{\bq_{F,i}}_{i=1}^{m_F}$, associated with $F$, is obtained. Notice that this procedure maintains the relation, similarly to the fine-scale setting, that $\cF^H$ is the trace-space (on {\Faces}) for the functions in $\cE^H$.

\begin{remark}
In general, as far as the feasibility of the approach is concerned, $\cF^H$ does not need to be precisely the trace-space for $\cE^H$. Particularly, it can be a proper subspace of the trace-space (on {\Faces}) for the functions in $\cE^H$, potentially providing additional reduction, when employing static condensation (\cref{ssec:ipstatcond}). The study of the choices of $\cF^H$ and the effect this may have on the properties of the preconditioners is a subject of future work.
\end{remark}

\begin{remark}
An alternative choice is to utilize polynomial bases for $\cE^H$ and $\cF^H$ of order lower than the order of $\cE^h$ and $\cF^h$. This paper concentrates on coarse basis vectors obtained via the above described generalized eigenvalue problems.
\end{remark}

\subsection{Static condensation}
\label{ssec:ipstatcond}

Consider, to simplify the presentation, fine-scale spaces and matrices for the IP method -- $\cE^h \times \cF^h$, $\cA$, and $\cA_T$. The idea is, in lieu of directly preconditioning $\cA$ in $B_\mathrm{add}^\mone$ and $B_\mathrm{mult}^\mone$, to eliminate all edofs in $\cA$ and precondition the resulting Schur complement, expressed only on the bdofs. This procedure is referred to as \emph{static condensation}, since it involves ``condensing'' the formulation on the interfaces.

Namely, using the block factorization (cf. \eqref{eq:cA})
\[
\cA =
\begin{bmatrix}
\cA_{ee} & \\
\cA_{be} &\cS
\end{bmatrix}
\begin{bmatrix}
I & \cA_{ee}^\mone \cA_{eb}  \\
 & I
\end{bmatrix},
\]
where $\cS = \cA_{bb} - \cA_{be} \cA_{ee}^\mone \cA_{eb}$ is the respective SPD Schur complement, a preconditioner for $\cA$ is obtained as follows:
\begin{equation}\label{eq:Bsc}
\cB_\mathrm{sc}^\mone =
\begin{bmatrix}
I & -\cA_{ee}^\mone\cA_{eb} \\
 & I
\end{bmatrix}
\begin{bmatrix}
\cA_{ee}^\mone & \\
-S^\mone \cA_{be} \cA_{ee}^\mone & S^\mone
\end{bmatrix},
\end{equation}
where $S$ is a SPD preconditioner for $\cS$, providing a SPD $\cB_\mathrm{sc}$. Observe that computing the action of $\cB_\mathrm{sc}^\mone$ involves applying $S^\mone$ once and $\cA_{ee}^\mone$ twice: first, during the ``elimination'' stage, represented by the second factor in \eqref{eq:Bsc}, that provides the argument for $S^\mone$; second, during the ``backward substitution'' stage, represented by the first factor in \eqref{eq:Bsc}, that updates the edof portion of the solution. If $S$ is spectrally equivalent to $\cS$, then $\cB_\mathrm{sc}$ is spectrally equivalent to $\cA$.

Clearly, obtaining $\cA_{ee}^\mone$, or an appropriate approximation thereof, is necessary for computing both $\cS$ and the action of $\cB_\mathrm{sc}^\mone$. As mentioned, no edofs are coupled across {\Elements}. That is, $\cA_{ee}$ is block-diagonal -- $\cA_{ee} = \diag(\cA_{T,ee})_{T\in\cT^H}$; see \eqref{eq:cAT}. Thus, computing $\cA_{ee}^\mone$, for obtaining $\cS$ and the action of $\cB_\mathrm{sc}^\mone$, only involves local work on {\Elements}. Moreover, consider the local, for $T$ and the respective $\cA_T$, cf. \eqref{eq:cAT}, SPSD Schur complements $\cS_T = \cA_{T,bb} - \cA_{T,be} \cA_{T,ee}^\mone \cA_{T,eb}$, formulated on the bdofs associated with $T$ (more precisely, with $\partial T$). Notice that the $\cS_T$ matrices provide a local structure associated with $\cS$, since $\cS$ can be assembled from $\cS_T$. Hence, while $\cS_T$ are generally dense, $\cS$ possesses a typical sparsity pattern associated with matrices coming from finite element methods. The local structures of $\cA$ and $\cS$, respectively provided by $\cA_T$ and $\cS_T$, facilitate the application of AMGe methods for preconditioning $\cA$ and $\cS$, as described in \cref{sec:nonconfprecond}.

Note that everything is general and also applies in the coarse-scale setting, of \cref{ssec:ipcoarseform}, by instead utilizing $\cE^H \times \cF^H$, $\cA^H$, and $\cA_T^H$, providing the Schur matrices $\cS^H$ and $\cS_T^H$, as well as the preconditioners $S^H$ for $\cS^H$ and $(\cB_\mathrm{sc}^H)^\mone$ for $\cA^H$.

\section{The smoother \texorpdfstring{$M$}{M}}
\label{sec:smoother}

A particular smoother $M$, for $A$, which is a part of the auxiliary space preconditioners proposed in this paper, is shortly described now. Particularly, a polynomial smoother is utilized based on the Chebyshev polynomial of the first kind.

For a given integer $\nu \ge 1$, consider the polynomial of degree $3\nu + 1$ on $[0,1]$
\[
p_\nu(t) = \lp 1 - T_{2\nu+1}^2(\sqrt{t}) \rp (-1)^\nu \frac{1}{2\nu+1} \frac{T_{2\nu+1}(\sqrt{t})}{\sqrt{t}},
\]
satisfying $p_\nu(0) = 1$, where $T_l(t)$ is the Chebyshev polynomial of the first kind on $[-1, 1]$. Then, $M$ is defined as $M^\mone = [I - p_\nu(b^\mone D^\mone A)]A^\mone$ or, equivalently, $I - M^\mone A = p_\nu(b^\mone D^\mone A)$, where $b = \cO(1)$ is a parameter satisfying $\bv^T A \bv \le b\, \bv^T D \bv$, for all $\bv \in \cU^h$, and $D$ is the diagonal of $A$ (or another appropriate diagonal matrix). Note that $M$ is SPD and the action of such a polynomial smoother is computed via $3\nu+1$ Jacobi-type iterations, using the roots of the polynomial, which makes it convenient for parallel computations; see \cite[Section 4.2.2]{MSthesis}. 
%Note that \eqref{eq:smoothcoerc} holds for the here described smoother $M$.

In practice, $D$, in the definition of $M$, can be replaced by a diagonal weighted $\ell_1$-smoother like $W = \diag(w_i)_{i=1}^{\dim(\cU^h)}$, where $w_i = \sum_{j=1}^{\dim(\cU^h)} \lv a_{ij} \rv \sqrt{a_{ii}/a_{jj}}$. Such a choice allows setting $b = 1$.

For more information on the subject, consult \cite{VassilevskiMG,2011Smoothers,2012SAAMGe,2012ConvSAAMG,1999TGElasticity,2012xinv}.

\section{AMGe for the nonconforming formulations}
\label{sec:nonconfprecond}

A procedure for constructing AMGe hierarchies and preconditioners, for SPD matrices and applicable to the auxiliary space problems, is outlined in this section. The approach is related to the ideas in \cite{2007AMGe,2008AMGe,2012SAAMGe}. First, the method is described in a general abstract setting, which requires reusing some of the notions and notation, introduced in \cref{sec:basic}, in a slightly different context. Then, details concerning the specifics of applying the approach in the auxiliary space setting are discussed, including the mapping between the terminology here and in \cref{sec:basic}. This should avoid any confusion between the notions and they should not get mixed.

\subsection{General description}
\label{ssec:generalsaamge}

Let a collection $\fT^h = \set{\tau}$ of elements and an associated set of dofs $\fD^h$, on $\fT^h$, be given. Here, $\fT^h$ and $\fD^h$ supply the respective relations element_element and element_dof. Additionally, SPSD element matrices $A_\tau$, for $\tau \in \fT^h$, are obtainable, whose global assembly, in accordance with element_dof, provides a SPD matrix $A$. This constitutes the given data and the goal is to construct a preconditioner for $A$. Typically, such data comes from meshes, finite element spaces, and weak formulations, as is the case in this paper, but the approach can be evoked with data that is generally not necessarily associated with finite element methods.

Using element_element and, e.g., a graph partitioner, a collection $\fT^H = \set{T}$ of {\Elements} is obtained, where the {\Elements}, $T$, are non-overlapping and connected, in terms of element_element, sets of elements, $\tau$. This is expressed via the relation {\Element}_element. As before, compute
\[
\text{\Element_dof} = \text{\Element_element} \times \text{element_dof},
\]
which represents the {\Elements} as sets of dofs. An intersection procedure\footnote{This is fundamentally the same procedure as the one identifying {\Faces} in \cref{ssec:meshagg}.} over {\Element}_dof generates so called \emph{minimal intersection sets} (MISes); see \cite{VassilevskiMG,2007AMGe,2008AMGe}. In more detail, MISes are (non-overlapping) equivalence classes of dofs with respect to the relation that two dofs are equivalent if they belong to identical sets of {\Elements}, in accordance with $(\text{\Element_dof})^T$. The collection of MISes partitions the set of dofs, $\fD^h$, and, for every $T$, there is a subcollection of MISes that partitions the dofs in $T$, as determined by \Element_dof. The intersection procedure constructs the relations \textsc{mis}_dof and {\Element}_\textsc{mis}, where
\[
\text{{\Element}_\textsc{mis}} = \text{{\Element}_dof} \times (\text{\textsc{mis}_dof})^T.
\]

Next, the derivation of a coarse basis, as a linearly independent set of vectors defined on dofs, is described. Based on \Element_element (also, utilizing element_dof and \Element_dof), assemble local, on $T$, SPSD matrices $A_T$ from the element matrices $A_\tau$, for $\tau \in T$. Solve local generalized eigenvalue problems
\begin{equation}\label{eq:seval}
A_T \bq = \lambda D_T \bq,
\end{equation}
where $D_T$ is the diagonal of $A_T$ and the eigenvalues are ordered $\lambda_1 \le \dots \le \lambda_{n_T}$. Take the first $m_T$ ($1\le m_T < n_T$) eigenvectors of \eqref{eq:seval} forming the $D_T$-orthogonal set $\set{\bq_{T,i}}_{i=1}^{m_T}$ associated with $T$. Particularly, let $\theta_s \in (0,1)$ be given and $m_T$ satisfy $\lambda_{m_T} \le \theta_s \lambda_{n_T}$ and $\lambda_{m_T+1} > \theta_s \lambda_{n_T}$. Then, for each MIS $\sM$, collect $\set{\bq_{T,i}}_{i=1}^{m_T}$ from all $T$ associated with $\sM$, via $(\text{{\Element}_\textsc{mis}})^T$, and restrict them to $\sM$ based on the relations \textsc{mis}_dof and {\Element}_dof. After performing SVD to filter out any linear dependence, the $\ell_2(\sM)$-orthogonal basis $\set{\bq_{\sM,i}}_{i=1}^{m_\sM}$, associated with $\sM$, is obtained and organized as the columns of a local, on $\sM$, prolongation matrix $P_{\sM} = [\bq_{\sM,1}, \cdots, \bq_{\sM,m_\sM}]$. The resulting global prolongation matrix takes the form
\[
P =
\begin{bmatrix}
P_{\sM_1} &  & \\
& \ddots & \\
& & P_{\sM_{n_a}}
\end{bmatrix}.
\]
Similarly, for $T \in \fT^H$ and all $\sM$ related to $T$, via {\Element}_\textsc{mis}, collect all $P_{\sM}$ to construct a local, on $T$, prolongation matrix $P_T$.

By building $P$, a coarse space is constructed, where the columns of $P$ form the basis. Identify the coarse degrees of freedom, designated by ``\Dofs'' and constituting the set $\fD^H$, with the columns of $P$, i.e., with the basis vectors, and define the relation {\Dof}_dof as a representation of the sparsity pattern of $P^T$. Consequently, a {\Dof} is related to a dof if and only if the respective coarse basis vector is supported on that dof. Similarly, an {\Element} is related to a {\Dof}, determining {\Element}_{\Dof}, whenever the respective coarse basis vector is supported on any dof of the {\Element}. That is,
\[
\text{{\Element}_{\Dof}} = \text{{\Element}_dof} \times (\text{{\Dof}_dof})^T.
\]
Notice that this corresponds to the columns of $P_T$. Hence, coarse SPSD element matrices are consistently obtained via local ``RAPs'' -- $A_{T,c} = P_T^T A_T P_T$ -- and the global SPD coarse matrix is $A_c = P^T A P$, which can be assembled from $A_{T,c}$, for $T \in \fT^H$, in accordance with {\Element}_{\Dof}. Furthermore, {\Elements} are related via
\begin{align*}
\text{{\Element}_{\Element}} = \text{{\Element}_element} \times \text{element_element}\\
\times (\text{{\Element}_element})^T.
\end{align*}

By assigning
\begin{gather*}
\fT^h \leftarrow \fT^H,\;\fD^h \leftarrow \fD^H,\;\text{element_element} \leftarrow \text{{\Element}_{\Element}},\\
\text{element_dof} \leftarrow \text{{\Element}_{\Dof}},\; \set{A_\tau} \leftarrow \set{A_{T,c}},
\end{gather*}
the above procedure can be executed recursively, building a hierarchy of meshes, spaces, bases, degrees of freedom, element matrices, global matrices, relations, and transition operators between the spaces. To obtain multilevel cycles as preconditioners, only remains to appoint a ``relaxation'' (``smoothing'') method on each level. To that purpose, the polynomial smoother in \cref{sec:smoother} is employed with an integer parameter $\nu_s \ge 1$. This algorithm is implemented, including parallel, in SAAMGE \cite{saamge}.

\begin{remark}\label{rem:shortsaamge}
Observe that the approach here can be invoked with given data: $\fT^h$, $\fD^h$, element_element, $\fT^H$ (in the form of {\Element}_element), {\Element}_dof, and $\set{A_T\where T \in \fT^H}$. In that case, the procedures for obtaining these structures are skipped and the precursors (element_dof and $\set{A_\tau\where \tau \in \fT^h}$), which, in fact, may not be available, are not needed.
\end{remark}

\subsection{On the nonconforming formulations}

Specifics, concerning the application of the method outlined in \cref{ssec:generalsaamge} in the context of the nonconforming formulations, are addressed now.

First, consider the fine-scale IP setting, involving $\cE^h \times \cF^h$, $\cA$, and $\cA_T$. Since agglomeration is already performed to form the IP problem, without employing a coarsening process for the spaces, it is natural to reuse the same agglomeration and assign $\fT^h \leftarrow \cT^h$ (using its element_element), the set of adofs (associated with $\cE^h \times \cF^h$) as $\fD^h$, $\fT^H \leftarrow \cT^H$ with the respective {\Element}_element and $\text{{\Element}_dof} \leftarrow \text{{\Element}_adof}$; see \cref{ssec:meshagg,ssec:ipformulation}. In this case, the respective local, for $T\in \fT^H$, agglomerate matrices are the already assembled IP matrices $\set{\cA_T\where T \in \fT^H}$. Then, in view of \cref{rem:shortsaamge}, the general scheme in \cref{ssec:generalsaamge} produces a hierarchy of spaces and a multilevel preconditioner for $\cA$.

Notice that the outlined methodology automatically generates a hierarchy of space pairs $\cE^{H_{l}} \times \cF^{H_{l}}$, for $l = 0,\dots,n_\ell$ and $H_{0} = h$, and the IP method formulated on these spaces via a variational (``RAP'') procedure. This is due to the MISes' construction, using {\Element}_adof, automatically reidentifying the {\Faces} and separating edofs from bdofs, which eventually results in obtaining separate bases associated with {\Elements} and interfaces. Thus, once the space pair $\cE^h \times \cF^h$ is constructed as discontinuous, the whole hierarchy maintains the same general complexion.

Comparing $\cE^{H_{1}} \times \cF^{H_{1}}$, constructed here by a single coarsening, with the space pair in \cref{ssec:ipcoarseform}, resemblance can be seen. While the notion of {\Element} is the same, a major difference is that $\cE^{H_{1}} \times \cF^{H_{1}}$ uses basis vectors obtained via generalized eigenvalue problems for the local IP matrices $\cA_T$, as defined by \eqref{eq:localip}, whereas \cref{ssec:ipcoarseform} utilizes $A_T$ -- the local representations of \eqref{eq:weakform}. Moreover, the final preconditioners for $A$ have clear differences. Namely, $B_\mathrm{add}$ in \eqref{eq:Badd} and $B_\mathrm{mult}$ in \eqref{eq:Bmult} employ a fine-scale auxiliary space, $\cE^h \times \cF^h$, and a multigrid preconditioner, $\cB$, for $\cA$ acts on the respective fine level via relaxation. In contrast, $B_\mathrm{add}^H$ and $B_\mathrm{mult}^H$, in \eqref{eq:BH}, directly utilize a coarse auxiliary space, $\cE^H \times \cF^H$, and a multigrid preconditioner, $\cB^H$, for $\cA^H$ performs no action on the fine level. In the latter case, the method of \cref{ssec:generalsaamge} can still be employed to precondition $\cA^H$ from \cref{ssec:ipcoarseform}. This is described next.

Since one coarsening is readily accomplished in \cref{ssec:ipcoarseform}, the coarse level there acts as a fine one for the preconditioner here. That is, assign $\fT^h \leftarrow \cT^H$, $\text{element_element} \leftarrow \text{\Element_\Element}$, the set of {\Adofs} (associated with $\cE^H \times \cF^H$) as $\fD^h$, and $\text{element_dof} \leftarrow \text{{\Element}_{\Adof}}$; see \cref{ssec:meshagg,ssec:ipcoarseform}. Also, the available coarse IP matrices $\set{\cA_T^H\where T \in \cT^H}$ act as fine-level element matrices for the solver here. Then, the full procedure in \cref{ssec:generalsaamge} can be invoked, obtaining a multilevel preconditioner for $\cA^H$.

Finally, the AMGe method of \cref{ssec:generalsaamge} can be applied to the ``condensed'' formulations of \cref{ssec:ipstatcond}, since the approach in \cref{ssec:ipstatcond} provides a local structure. Particularly, the static condensation method eliminates edofs, leaving only bdofs. That is, the only remaining space is $\cF^h$ and a ``condensed'' formulation on that space. The procedure is analogous and obtainable from the above discussion by simply replacing: adof by bdof, $\cA$ by $\cS$, and $\cA_T$ by $\cS_T$. Then, the agglomeration and MISes' construction coarsen the interfaces and produce basis vectors to generate a space hierarchy $\cF^{H_{l}}$ and a multilevel preconditioner for $\cS$, where ``RAP'' provides ``condensed'' formulations on each level. Similarly, having in mind \cref{ssec:ipcoarseform}, a multigrid preconditioner for $\cS^H$ can be obtained starting from $\cF^H$ by using the above procedure and replacing: {\Adof} by {\Bdof}, $\cA^H$ by $\cS^H$, and $\cA_T^H$ by $\cS_T^H$. Note that, as before, starting with $\cF^h$ reuses the agglomeration from the IP method, whereas starting with $\cF^H$ interprets those agglomerates as fine-level elements.

\section{Numerical examples}
\label{sec:numerical}

\begin{figure}
\subfloat[][Coefficient in 3D]{\includegraphics[width=0.485\textwidth]{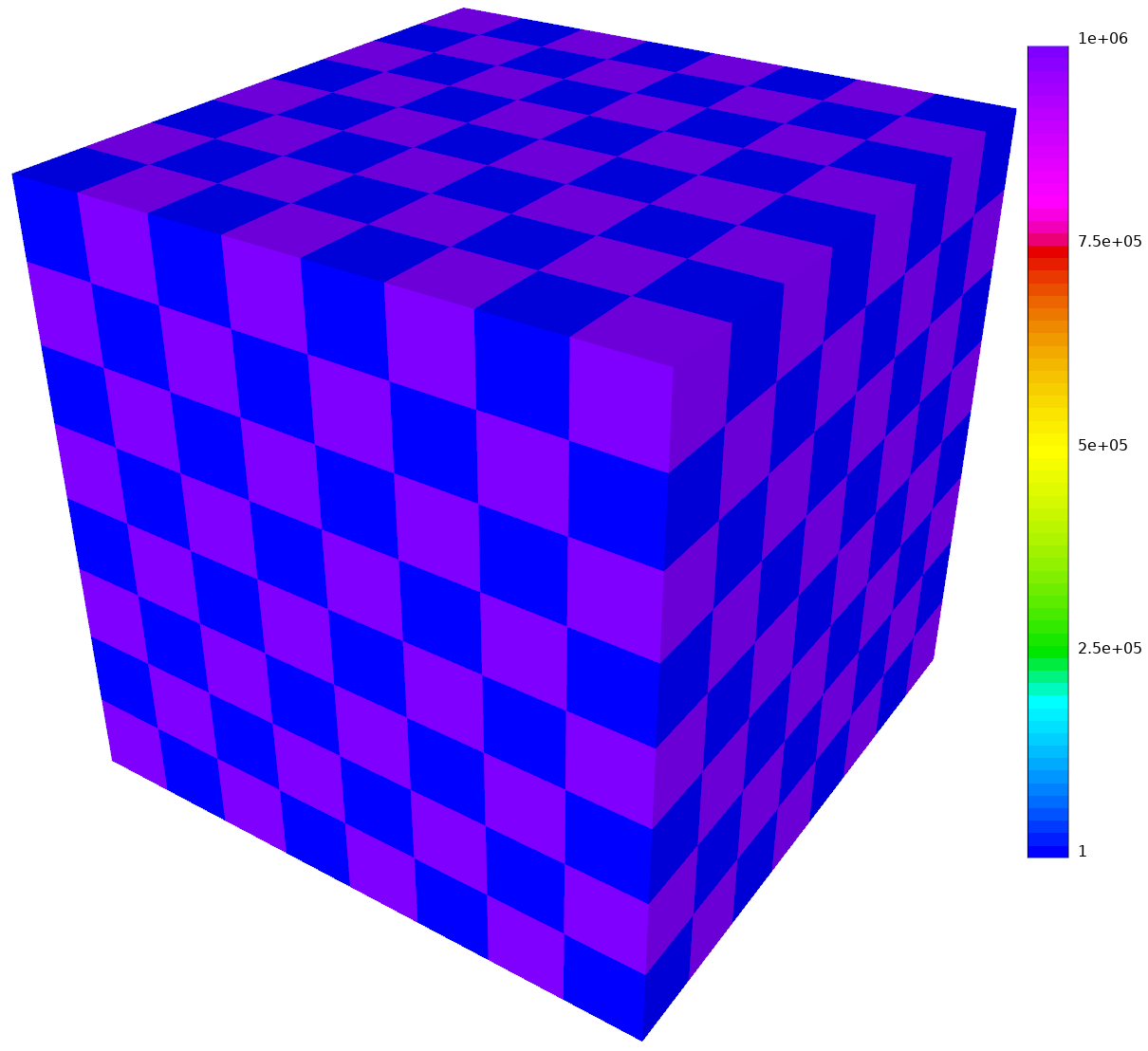}\label{fig:coefficient}}\quad
\subfloat[][Coefficient in 2D]{\includegraphics[width=0.485\textwidth]{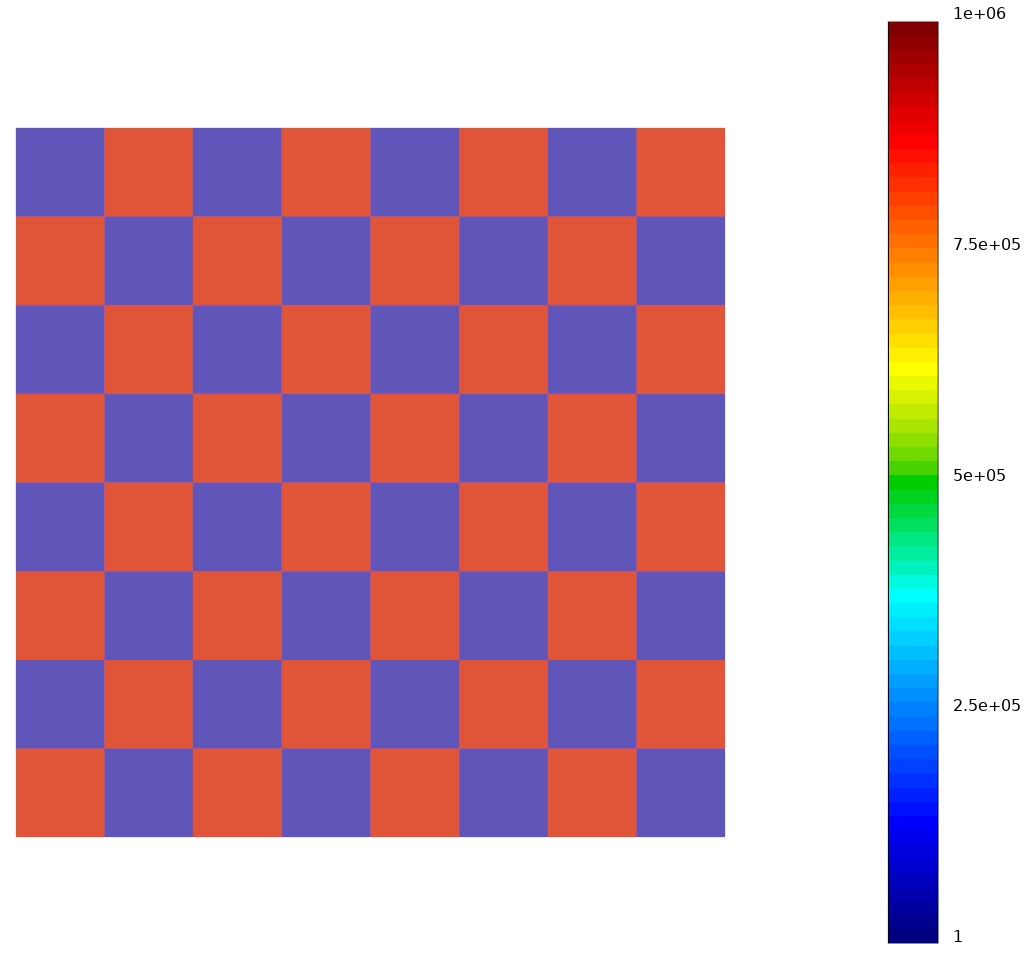}\label{fig:coefficient2D}}
\caption[]{The coefficient $\kappa$ in \eqref{eq:pde}.}
\end{figure}

A set of numerical results is shown. Two test cases are presented: on low and high order discretizations. First, the test setting is outlined.

\subsection{Setting}

The test problem is \eqref{eq:pde} with $f \equiv 1$, $\kappa$ representing the high contrast coefficients shown in \cref{fig:coefficient,fig:coefficient2D}, and $\delta = 1$ in \eqref{eq:localip}. Both $\cT^h$ and $\cT^H$, for the IP formulation, are regular and of the general type depicted in \cref{fig:ELEMENTS,fig:ELEMENTS2D}. Any further agglomeration, required by the procedure of \cref{sec:nonconfprecond}, is produced using METIS \cite{metis}. Note that the {\Elements} in \cref{fig:ELEMENTS,fig:ELEMENTS2D} get respectively refined as the mesh is refined, i.e., refining $\cT^h$ leads to a corresponding refinement of $\cT^H$. In contrast, the coefficients in \cref{fig:coefficient,fig:coefficient2D} remain fixed and their pattern is invariant with respect to mesh refinement.

A few measures of operator complexity (OC), representing relative sparsity in the obtained space hierarchies, are reported. Namely, the OC of the IP reformulation:
\[
\mathrm{OC}_\mathrm{IP} = 1 + \NNZ(\cA^{H_0})/\NNZ(A);
\]
the OC of the ``auxiliary'' multigrid hierarchy (e.g., generated by the method in \cref{sec:nonconfprecond}), relative to the IP matrix:
\[
\mathrm{OC}_{\mathrm{aux}} = 1 + \sum_{l=1}^{n_\ell} \NNZ(\cA^{H_l}) / \NNZ(\cA^{H_0});
\]
and the total OC, relative to the matrix $A$ in \eqref{eq:H1linsys}:
\[
\mathrm{OC}_{\mathrm{orig}} = 1 + \sum_{l=0}^{n_\ell} \NNZ(\cA^{H_l}) / \NNZ(A) = 1 + \mathrm{OC}_{\mathrm{aux}} \times (\mathrm{OC}_\mathrm{IP} - 1).
\]
Here, NNZ denotes the number of nonzero entries in the sparsity pattern of a matrix, $n_\ell$ is the number of levels (excluding the fine one) in the ``auxiliary'' multigrid hierarchy (e.g., generated by the method in \cref{sec:nonconfprecond}), and $\cA^{H_0} = \set{\cA,\cS,\cA^H,\text{ or }\cS^H}$ (depending on the case) is the matrix of the IP reformulation, involving or not coarsening and/or static condensation. The matrices $\cA^{H_l}$, for $l \ge 1$, represent the coarse versions of $\cA^{H_0}$ in the solver hierarchy built by the method in \cref{sec:nonconfprecond}.

Recall that dofs are associated with $\cU^h$ and the matrix $A$ in \eqref{eq:H1linsys}, whereas adofs, edofs, bdofs and {\Adofs}, {\Edofs}, {\Bdofs} are related to $\cE^h\times\cF^h$ and $\cE^H\times\cF^H$ and the matrices $\cA$, $\cS$ or $\cA^H$, $\cS^H$.

In all cases, the preconditioned conjugate gradient (PCG) method and the respective multiplicative auxiliary space preconditioner are applied for solving the linear system \eqref{eq:H1linsys} and the number of iterations, $n_\mathrm{it}$, is reported. The relative tolerance is $10^{-8}$ and the measure in the stopping criterion is $\br^T B^\mone \br$, for a current residual $\br$ and the respective preconditioner $B^\mone$ in the PCG method. The smoother in \cref{sec:smoother} is employed. Moreover,  coarsening (\cref{ssec:ipcoarseform}) is utilized in all cases and the particular preconditoner is $B^H_\mathrm{mult}$ in \eqref{eq:BH}. Thus, the problem is reduced to the choice of $(\cB^H)^\mone$ as an approximate inverse of $\cA^H$. Particularly, when static condensation (\cref{ssec:ipstatcond}) is employed, i.e., $(\cB^H)^\mone = (\cB_\mathrm{sc}^H)^\mone$ (a coarse version of \eqref{eq:Bsc}), the problem is reduced to the choice of $(S^H)^\mone$ as an approximate inverse of the respective $\cS^H$.

\subsection{Low order discretization}

\begin{table}
\centering
\small
\captionsetup[subfloat]{width=0.4\textwidth}
\subfloat[][$(\cB^H)^\mone = (\cA^H)^\mone$]
{\begin{tabular}{ | l | r | r | l | r | }
\hline
Refs & \# dofs & \# {\Adofs} & $\mathrm{OC}_\mathrm{IP}$ & $n_\mathrm{it}$ \\
\hline
0 & 4913 & 2240 & 1.304 & 2 \\
\hline
1 & 35937 & 17152 & 1.296 & 4 \\
\hline
2 & 274625 & 134144 & 1.291 & 6 \\
\hline
3 & 2146689 & 1060864 & 1.288 & 9 \\
\hline
4 & 16974593 & 8437760 & 1.287 & 12 \\
\hline
5 & 135005697 & 67305472 & 1.286 & 15 \\
\hline
\end{tabular}\label{tbl:loworderexact}}
\quad
\subfloat[][One AMGe (\cref{sec:nonconfprecond}) V-cycle as $(\cB^H)^\mone$]
{\begin{tabular}{ | l | r | l | l | r | }
\hline
Refs & $n_\ell+1$ & $\mathrm{OC}_\mathrm{aux}$ & $\mathrm{OC}_{\mathrm{orig}}$ & $n_\mathrm{it}$ \\
\hline
0 & 4 & 1.542 & 1.468 & 2 \\
\hline
1 & 5 & 1.711 & 1.506 & 4 \\
\hline
2 & 7 & 1.762 & 1.513 & 6 \\
\hline
3 & 8 & 1.753 & 1.506 & 11 \\
\hline
4 & 9 & 1.728 & 1.496 & 21 \\
\hline
5 & 9 & 1.708 & 1.489 & 41 \\
\hline
\end{tabular}\label{tbl:loworder}}
\caption[]{Low order test results with coarsening and without static condensation. That is, requiring an (approximate) inverse, $(\cB^H)^\mone$, of the respective $\cA^H$.}
\end{table}

\begin{table}
\centering
\small
\captionsetup[subfloat]{width=0.4\textwidth}
\subfloat[][$(\cB_\mathrm{sc}^H)^\mone$ with $(S^H)^\mone = (\cS^H)^\mone$]
{\begin{tabular}{ | l | r | r | l | r | }
\hline
Refs & \# dofs & \# {\Bdofs} & $\mathrm{OC}_\mathrm{IP}$ & $n_\mathrm{it}$ \\
\hline
0 & 4913 & 1728 & 1.647 & 2 \\
\hline
1 & 35937 & 13056 & 1.619 & 4 \\
\hline
2 & 274625 & 101376 & 1.604 & 6 \\
\hline
3 & 2146689 & 798720 & 1.597 & 9 \\
\hline
4 & 16974593 & 6340608 & 1.593 & 12 \\
\hline
5 & 135005697 & 50528256 & 1.591 & 15 \\
\hline
\end{tabular}\label{tbl:shurloworderexact}}
\quad
\subfloat[][$(\cB_\mathrm{sc}^H)^\mone$ with fixed three PCG iterations, preconditioned by an AMGe (\cref{sec:nonconfprecond}) V-cycle, as $(S^H)^\mone$]
{\begin{tabular}{ | l | r | l | l | r | }
\hline
Refs & $n_\ell+1$ & $\mathrm{OC}_\mathrm{aux}$ & $\mathrm{OC}_{\mathrm{orig}}$ & $n_\mathrm{it}$ \\
\hline
0 & 4 & 1.337 & 1.864 & 2 \\
\hline
1 & 5 & 1.571 & 1.973 & 4 \\
\hline
2 & 7 & 1.604 & 2.046 & 6 \\
\hline
3 & 8 & 1.814 & 2.083 & 9 \\
\hline
4 & 9 & 1.852 & 2.098 & 16 \\
\hline
5 & 9 & 1.871 & 2.106 & 28 \\
\hline
\end{tabular}\label{tbl:shurlowordersaamge}}\\
\subfloat[][$(\cB_\mathrm{sc}^H)^\mone$ with fixed one PCG iteration, preconditioned by a BoomerAMG V-cycle, as $(S^H)^\mone$]
{\begin{tabular}{ | l | r | l | l | r | }
\hline
Refs & $n_\ell+1$ & $\mathrm{OC}_\mathrm{aux}$ & $\mathrm{OC}_{\mathrm{orig}}$ & $n_\mathrm{it}$ \\
\hline
0 & 4 & 1.061 & 1.686 & 2 \\
\hline
1 & 5 & 1.090 & 1.675 & 4 \\
\hline
2 & 5 & 1.117 & 1.675 & 6 \\
\hline
3 & 6 & 1.153 & 1.688 & 9 \\
\hline
4 & 7 & 1.166 & 1.692 & 12 \\
\hline
5 & 9 & 1.185 & 1.701 & 17 \\
\hline
\end{tabular}\label{tbl:shurloworderboomeramg}}
\caption[]{Low order test results with coarsening and with static condensation. That is, a coarse version, $(\cB_\mathrm{sc}^H)^\mone$, of \eqref{eq:Bsc} is used, requiring an (approximate) inverse, $(S^H)^\mone$, of the respective $\cS^H$. Here, in two of the cases, $(S^H)^\mone$ is obtained invoking a fixed number of conjugate gradient iterations, preconditioned by multigrid V-cycles.}
\end{table}

Here, a 3D mesh of the type shown in \cref{fig:ELEMENTS} is sequentially refined and spaces, $\cU^h$, of piecewise linear finite elements are obtained. Coarse auxiliary spaces of \cref{ssec:ipcoarseform} are built with a single eigenvector per {\Element}, corresponding to the smallest eigenvalue. Static condensation (\cref{ssec:ipstatcond}) is very cheap in this case, involving the elimination of only one {\Edof} per {\Element}. The smoother in \cref{sec:smoother} is used throughout with $\nu = 4$. Also, whenever an AMGe hierarchy (\cref{sec:nonconfprecond}) is constructed, a single (smallest) eigenvector is taken from all local eigenvalue problems in \cref{sec:nonconfprecond} on all levels. 

First, no static condensation is employed and the IP problem is inverted (almost) exactly, resulting in $(\cB^H)^\mone = (\cA^H)^\mone$ in the auxiliary space preconditioner in \eqref{eq:BH}. Results are shown in \cref{tbl:loworderexact}. Then, $(\cB^H)^\mone$ is implemented via a single V-cycle of the solver in \cref{sec:nonconfprecond}; see \cref{tbl:loworder}. The number of iterations does not scale very well, in the particular case shown in \cref{tbl:loworder}, while trying to preserve reasonable cost per cycle of the method in \cref{sec:nonconfprecond}. This can be partially remedied by using static condensation (\cref{ssec:ipstatcond}) and wrapping the AMGe V-cycle in a few PCG iterations, as in \cref{tbl:shurlowordersaamge}, but the cost of each application of the action of $(S^H)^\mone$ is increased. However, invoking BoomerAMG (a well-known AMG implementation and a part of the HYPRE library\cite{hypre}) wrapped in a single PCG iteration provides a much more efficient alternative in this case. Indeed, observe that the number of iterations in \cref{tbl:shurloworderboomeramg} is almost the same as in \cref{tbl:shurloworderexact}, which involves the (almost) exact inversion of the respective Schur complement.

The AMGe method (\cref{sec:nonconfprecond}) demonstrates much better behavior in the high order tests below.

\subsection{High order discretization}
\label{ssec:highorder}

\begin{table}
\centering
\small
\captionsetup[subfloat]{width=0.4\textwidth}
\subfloat[][$(\cB_\mathrm{sc}^H)^\mone$ with $(S^H)^\mone = (\cS^H)^\mone$]
{\begin{tabular}{ | l | r | r | l | r | }
\hline
Order & \# dofs & \# {\Bdofs} & $\mathrm{OC}_\mathrm{IP}$ & $n_\mathrm{it}$ \\
\hline
1 & 289 & 136 & 1.738 & 2 \\
\hline
2 & 1089 & 144 & 1.117 & 6 \\
\hline
3 & 2401 & 756 & 1.816 & 5 \\
\hline
4 & 4225 & 980 & 1.557 & 4 \\
\hline
5 & 6561 & 1204 & 1.408 & 5 \\
\hline
6 & 9409 & 1428 & 1.313 & 7 \\
\hline
7 & 12769 & 1652 & 1.248 & 7 \\
\hline
8 & 16641 & 1876 & 1.201 & 4 \\
\hline
9 & 21025 & 2100 & 1.167 & 4 \\
\hline
10 & 25921 & 2324 & 1.141 & 3 \\
\hline
11 & 31329 & 2548 & 1.120 & 3 \\
\hline
12 & 37249 & 2772 & 1.104 & 4 \\
\hline
\end{tabular}}
\quad\quad
\subfloat[][$(\cB_\mathrm{sc}^H)^\mone$ with one AMGe (\cref{sec:nonconfprecond}) V-cycle as $(S^H)^\mone$]
{\begin{tabular}{ | l | r | l | l | r | }
\hline
Order & $n_\ell+1$ & $\mathrm{OC}_\mathrm{aux}$ & $\mathrm{OC}_{\mathrm{orig}}$ & $n_\mathrm{it}$ \\
\hline
1 & 2 & 1.685 & 2.243 & 3 \\
\hline
2 & 2 & 1.626 & 1.191 & 6 \\
\hline
3 & 3 & 1.254 & 2.023 & 6 \\
\hline
4 & 4 & 1.551 & 1.865 & 6 \\
\hline
5 & 5 & 1.758 & 1.717 & 8 \\
\hline
6 & 5 & 1.956 & 1.614 & 9 \\
\hline
7 & 6 & 2.185 & 1.541 & 9 \\
\hline
8 & 7 & 2.402 & 1.484 & 8 \\
\hline
9 & 7 & 2.618 & 1.437 & 9 \\
\hline
10 & 11 & 3.986 & 1.561 & 12 \\
\hline
11 & 12 & 4.318 & 1.520 & 14 \\
\hline
12 & 13 & 4.650 & 1.484 & 15 \\
\hline
\end{tabular}}
\caption[]{High order test results with coarsening and with static condensation. That is, a coarse version, $(\cB_\mathrm{sc}^H)^\mone$, of \eqref{eq:Bsc} is used, requiring an (approximate) inverse, $(S^H)^\mone$, of the respective $\cS^H$.}\label{tbl:highorder}
\end{table}

\begin{table}
\centering
\small
\captionsetup[subfloat]{width=0.4\textwidth}
\subfloat[][$(\cB^H)^\mone = (\cA^H)^\mone$]
{\begin{tabular}{ | l | r | r | l | r | }
\hline
Order & \# dofs & \# {\Adofs} & $\mathrm{OC}_\mathrm{IP}$ & $n_\mathrm{it}$ \\
\hline
1 & 289 & 200 & 1.573 & 3 \\
\hline
2 & 1089 & 208 & 1.090 & 6 \\
\hline
3 & 2401 & 984 & 1.430 & 5 \\
\hline
4 & 4225 & 1494 & 1.466 & 4 \\
\hline
5 & 6561 & 2130 & 1.492 & 5 \\
\hline
6 & 9409 & 3216 & 1.632 & 7 \\
\hline
7 & 12769 & 4982 & 1.891 & 7 \\
\hline
8 & 16641 & 7518 & 2.231 & 4 \\
\hline
9 & 21025 & 10978 & 2.666 & 4 \\
\hline
10 & 25921 & 15894 & 3.299 & 3 \\
\hline
\end{tabular}}
\quad\quad
\subfloat[][One AMGe (\cref{sec:nonconfprecond}) V-cycle as $(\cB^H)^\mone$]
{\begin{tabular}{ | l | r | l | l | r | }
\hline
Order & $n_\ell+1$ & $\mathrm{OC}_\mathrm{aux}$ & $\mathrm{OC}_{\mathrm{orig}}$ & $n_\mathrm{it}$ \\
\hline
1 & 2 & 1.791 & 2.026 & 3 \\
\hline
2 & 2 & 1.736 & 1.155 & 6 \\
\hline
3 & 3 & 1.231 & 1.529 & 10 \\
\hline
4 & 4 & 1.505 & 1.701 & 12 \\
\hline
5 & 5 & 1.779 & 1.875 & 16 \\
\hline
6 & 5 & 2.161 & 2.366 & 19 \\
\hline
7 & 6 & 2.562 & 3.282 & 25 \\
\hline
8 & 7 & 2.841 & 4.499 & 34 \\
\hline
9 & 7 & 3.161 & 6.267 & 46 \\
\hline
10 & 11 & 4.766 & 11.958 & 60 \\
\hline
\end{tabular}}
\caption[]{High order test results with coarsening and without static condensation. That is, requiring an (approximate) inverse, $(\cB^H)^\mone$, of the respective $\cA^H$.}\label{tbl:highordernoschur}
\end{table}

Intuitively, the IP reformulation is expected to exhibit more beneficial properties in the context of increasing the polynomial order of the finite element spaces, particularly in a combination with coarsening (\cref{ssec:ipcoarseform}) and static condensation (\cref{ssec:ipstatcond}). The available implementation is currently not matrix-free, therefore results in 2D are demonstrated using a fixed mesh of the type shown in \cref{fig:ELEMENTS2D} and increasing the polynomial order. Coarse auxiliary spaces described in \cref{ssec:ipcoarseform} are constructed with $\theta = 0.05$ and the static condensation in \cref{ssec:ipstatcond} is employed in \cref{tbl:highorder}, while \cref{tbl:highordernoschur} is produced utilizing the same parameters without static condensation. Notice that, for the condensed case, the {\Edofs} are eliminated, involving exclusively local coarse-scale work, leaving only {\Bdofs}. The smoother in \cref{sec:smoother} is used throughout with $\nu = 2$.

The method in \cref{sec:nonconfprecond} is constructed with $\theta_s = 0.05$. Since no mesh refinement is performed, no additional agglomeration is invoked. That is, the {\Elements} for the IP reformulation are maintained throughout the hierarchy and only the basis functions are reduced using the eigenvalue problems and imposing an additional requirement on the SVD filtering of linear dependencies on MISes. Namely, during the coarsening on each MIS, it is enforced that the SVD should remove at least 2 or 3 vectors relative to the current level being coarsened. This is reminiscent of $p$-multigrid but in a spectral AMGe setting.

Results, with static condensation, are shown in \cref{tbl:highorder}, demonstrating the especially good performance of the auxiliary space formulation -- the case of utilizing a coarse version, $(\cB_\mathrm{sc}^H)^\mone$, of \eqref{eq:Bsc} with $(S^H)^\mone = (\cS^H)^\mone$. The method of \cref{sec:nonconfprecond} also shows more reliable performance in the case of high order discretizations and static condensation. Maintaining all parameters the same and only disabling static condensation produces the results in \cref{tbl:highordernoschur}. Observe the differences in operator complexities as the order is increased and the declined performance of the AMGe solver of \cref{sec:nonconfprecond}. This illustrates the utility of static condensation, when high order discretizations are involved.

\section{Conclusions and future work}
\label{sec:conclusion}
In this paper, we have introduced a modification of the classical interior penalty (IP) discretization method by having an additional space of discontinuous functions associated with the interfaces between the subdomains (or elements) used to couple  the individual local bilinear forms. This allows for element-by-element (or subdomain-by-subdomain) assembly. The resulting modified IP bilinear form and a reduced (Schur complement) form of it, obtained by static condensation, are then utilized in the construction of a number of auxiliary space preconditioners. These are analyzed and their proven mesh-independence spectral equivalenvce properties are illustrated by a set of numerical tests on 2D and 3D second order scalar elliptic equations, including the case of high order finite element discretizations. The element-by-element assembly property of the modified IP bilinear form was also beneficial in the construction of element-based algebraic multigrid (AMGe) that is used to replace the exact inverses in the auxiliary space preconditioners. A possible feasible extension of this work is to utilize the studied  auxiliary space preconditioners with AMGe coarse solves in the setting of matrix-free solvers for high order finite element discretizations in combination with the polynomial smoothers presented in \cref{sec:smoother}. Moreover, applying the approach of this paper to formulations that are conforming in $H(\div)$ or $H(\curl)$ (see \cref{rem:general}) is an interesting future study.

\bibliographystyle{plain}
\bibliography{references.bib}

\end{document}